\DeclareMathAlphabet{\mathpzc}{OT1}{pzc}{m}{it}
\newtheorem{theorem}{Theorem}[section]
\newtheorem{lemma}[theorem]{Lemma}
\newtheorem{sublemma}[theorem]{Sublemma}
\newtheorem{proposition}[theorem]{Proposition}
\theoremstyle{definition}
\newtheorem{definition}[theorem]{Definition}
\newtheorem{notation}[theorem]{Notation}
\theoremstyle{remark}
\newtheorem{remark}[theorem]{Remark}
\newtheorem{claim}[theorem]{Claim}
\numberwithin{equation}{section}
\begin{document}

\title{Zariski-van Kampen theorems for singular varieties\,---\,an approach via the relative monodromy variation}

\author{Christophe Eyral and Peter Petrov}

\address{C. Eyral, Institute of Mathematics, Polish Academy of Sciences, \'Sniadeckich~8, 00-656 Warsaw, Poland}  
\email{eyralchr@yahoo.com} 
\address{P. Petrov, Universidade Federal Fluminense, Av. dos Trabalhadores, 420, Vila Santa Cecilia, 27225-125, Volta Redonda, RJ, Brazil, }
\email{pk5rov@gmail.com}


\subjclass[2010]{14F35, 14D05, 32S50.}
\keywords{Fundamental group; singular (quasi-projective) variety; pencil of hyperplane sections; relative loop; relative monodromy variation.}

\begin{abstract}
The classical Zariski-van Kampen theorem gives a presentation of the fundamental group of the complement of a complex algebraic curve in $\mathbb{P}^2$. The first generalization of this theorem to singular (quasi-projective) varieties was given by the first author. In both cases, the relations are generated by the standard monodromy variation operators associated with the special members of a generic pencil of hyperplane sections. 
In the present paper, we give a new generalization in which the relations are generated by the \emph{relative} monodromy variation operators introduced by D. Ch\'eniot and the first author. The advantage of using the relative operators is not only to cover a larger class of varieties but also to unify the Zariski-van Kampen type theorems for the fundamental group and for higher homotopy groups. In the special case of non-singular varieties, the main result of this paper was conjectured by D.~Ch\'eniot and the first author.
\end{abstract}

\maketitle

\markboth{C. Eyral and P. Petrov}{Zariski-van Kampen theorems for singular varieties}  

\section{Introduction}

Let $X:=Y\setminus Z$ be a (possibly singular) quasi-projective variety in the complex projective space $\mathbb{P}^n$ and let $L$ be a generic hyperplane of $\mathbb{P}^n$. By the singular versions of the Lefschetz hyperplane section theorem (cf.~\cite{E2,HL,GM1,GM2}), we know that there exists an integer $q(Y,Z)$---depending on the nature and the position of the singularities of $Y$ and $Z$---such that the pair $(X,L\cap X)$ is $q(Y,Z)$-connected. For instance, if $X$ is a purely dimensional non-singular or local complete intersection variety, then $q(Y,Z)=\dim X-1$ (cf.~\cite{HL2}). In the present paper, we are interested in the special class of varieties for which the integer $q(Y,Z)$ is equal to $1$. (In particular, this includes non-singular and local complete intersection varieties of pure dimension $2$.) For such a variety, the natural map
\begin{equation*}
\pi_q(L\cap X,x_0) \to \pi_q(X,x_0)
\end{equation*}
is bijective for $q=0$ and surjective for $q=1$, where $\pi_1(\cdot)$ denotes the fundamental group and $\pi_0(\cdot)$ the set of path-connected components. Our goal is to determine the kernel of the map $\pi_1(L\cap X,x_0) \to \pi_1(X,x_0)$. In the special case where $X=\mathbb{P}^2\setminus C$ with $C$ an algebraic curve, O. Zariski \cite{Z} and E. K. van Kampen \cite{vK} showed that the kernel in question is generated by the standard monodromy variation operators associated with the ``special'' members of a generic pencil of line sections. (By special sections, we mean those arising from the lines of the pencil which are tangent to the curve or that cross a singularity.) Thus, combined with the surjectivity of the map $\pi_1(L\cap (\mathbb{P}^2\setminus C),x_0) \to \pi_1(\mathbb{P}^2\setminus C,x_0)$, the fundamental group $\pi_1(\mathbb{P}^2\setminus C,x_0)$ is the quotient of $\pi_1(L\cap (\mathbb{P}^2\setminus C),x_0)$ by the ``monodromy relations''---that is, by the normal subgroup of $\pi_1(L\cap (\mathbb{P}^2\setminus C),x_0)$ generated by the standard monodromy variation operators. 

The first generalization of the Zariski-van Kampen theorem to singular varieties was given by the first author in \cite{E1}. There, as in the case of plane curve complements, the kernel of the map $\pi_1(L\cap X,x_0) \to \pi_1(X,x_0)$ is generated by the standard monodromy variation operators associated with the special members of a generic pencil of hyperplane sections. For example, if $X$ is the complement of a curve $C$ in a surface $S$ of $\mathbb{P}^3$ with $S\setminus C$ non-singular, then the result can be easily stated as follows. Consider a generic pencil $\Pi$ of hyperplanes of $\mathbb{P}^3$ such that $L\in\Pi$. Write $\Pi_0$ for its base locus (i.e., $\Pi_0$ is the $(n-2)$-plane given by the intersection of all the members of $\Pi$), and assume that the natural map 
\begin{equation}\label{hypothesis}
\pi_0(\Pi_0\cap (S\setminus C))\rightarrow
\pi_0(L\cap (S\setminus C)) 
\end{equation}
is bijective. (Note that when $S=\mathbb{P}^2$ this condition is always satisfied.) Under these assumptions, Theorem 5.1 or Corollary 5.3 of \cite{E1} says that if $x_0$ is a base point in $\Pi_0\cap (S\setminus C)$, then the fundamental group $\pi_1(S\setminus C,x_0)$  is the quotient of the group $\pi_1(L\cap (S\setminus C),x_0)$ by the monodromy relations---that is, all the relations of the form
\begin{align}\label{relabsolute}
\mbox{Var}_h([\alpha]):=[\alpha]^{-1} h_\#([\alpha])=
[\alpha^{-1}\cdot h\circ\alpha]=e,
\end{align}
where $[\alpha]\in \pi_1(L\cap (S\setminus C),x_0)$, $e$ is the trivial element, and $h_\#$ is the homomorphism induced in homotopy by a monodromy $h$ associated with a special hyperplane of the pencil. Note that $h$ can always be chosen so that it is the identity on $\Pi_0 \cap (S\setminus C)$, and hence the composition of loops in (\ref{relabsolute}) is well defined.

A conjecture of D. Ch\'eniot and the first author \cite[\S 4]{CE} says that the above mentioned result still holds true when the map (\ref{hypothesis}) is not bijective provided that we consider the action of the monodromies not only on the absolute loops of $L\cap (S\setminus C)$ but also on the ``relative'' loops of $L\cap (S\setminus C)$ modulo $\Pi_0\cap (S\setminus C)$. (Here, by a relative loop, we mean a path $\alpha\colon I:=[0,1]\rightarrow L\cap (S\setminus C)$ with $\alpha(1)=x_0$---the base point---and $\alpha(0)\in \Pi_0\cap (S\setminus C)$.) Precisely, as $h$ is the identity on $\Pi_0\cap (S\setminus C)$, the composition  
\begin{align*}
\alpha^{-1} \cdot h\circ\alpha\colon I \rightarrow L\cap (S\setminus C)
\end{align*}
of the relative loops $\alpha^{-1}$ and $h\circ\alpha$, defined by
\begin{align}\label{comprelloop}
\alpha^{-1} \cdot h\circ\alpha(t): = 
\left\{
\begin{aligned}
& \alpha(1-2t)&\mbox{ for }\quad  0\leq t\leq 1/2,\\
& h\circ\alpha(2t-1) &\mbox{ for } \quad 1/2\leq t\leq 1,
\end{aligned}
\right.
\end{align}
is well defined and is an absolute loop based at $x_0$ (i.e., $\alpha^{-1} \cdot h\circ\alpha(0)=\alpha^{-1} \cdot h\circ\alpha(1)=x_0$), even if $\alpha$ is a relative loop. Note that, in general, composing relative loops does not make sense.  The possibility to perform such a composition in our situation comes from the fact that $h$ is the identity on $\Pi_0\cap (S\setminus C)$. Observe that when $\alpha$ is an absolute loop, the relation (\ref{comprelloop}) is nothing but the standard composition of the absolute loops $\alpha^{-1}$ and $h\circ\alpha$. The conjecture in \cite[\S 4]{CE}  says that the result about $\pi_1(S\setminus C,x_0)$ which we have mentioned above still holds true when the map (\ref{hypothesis}) is not bijective provided  that we add all the relations of the form
\begin{align*}\label{relrelative}
\mbox{Var}^{\, \mbox{\tiny rel}}_h([\alpha]):=[\alpha^{-1}\cdot h\circ\alpha]=e
\end{align*}
to the relations (\ref{relabsolute}), with this time $[\alpha]$ belonging to the relative homotopy \emph{set} $\pi_1(L\cap (S\setminus C),\Pi_0\cap (S\setminus C),x_0)$. (Note that, in general, this pointed set does not have a group structure.)

In the present paper, we prove that this conjecture is true. Actually, we prove a new generalization of the Zariski-van Kampen theorem which not only implies the above conjecture (i.e., the non-singular case) but which also covers a class of \emph{singular} varieties larger than the class covered by Theorem 5.1 of \cite{E1}. Besides to obtain a larger class of varieties, another advantage of using the relative operators is to unify the Zariski-van Kampen type theorems for the fundamental group and for higher homotopy groups. (For details about higher homotopy groups, we refer the reader to \cite{Lib1,CL,CE}.)

\begin{notation}
Throughout, $I$ denotes the unit interval $[0,1]$.
If $(A,B)$ is a pointed pair with base point $b\in B$, we denote by $F^1(A,B,b)$ the set of \emph{relative} loops of $A$ modulo $B$ based at $b$. These are (continuous) maps $\alpha\colon I\to A$ such that $\alpha(0)\in B$ and $\alpha(1)=b$.
We denote by $F^1(A,b)$ the set of loops of $A$ based at $b$---that is, maps $\alpha\colon I\to A$ such that $\alpha(0)=\alpha(1)=b$. We sometimes say \emph{absolute} loop instead of loop to emphasize the contrast with relative loops. 

Given $\alpha$ in $F^1(A,B,b)$ (respectively, in $F^1(A,b)$), we denote by $[\alpha]_{A,B,b}$ (respectively, by $[\alpha]_{A,b}$) the homotopy class of $\alpha$ in the pointed set $\pi_1(A,B,b)$ (respectively, in the fundamental group $\pi_1(A,b)$). When there is no ambiguity, we omit the subscripts. If $[\alpha]_{A,B,b}=[\beta]_{A,B,b}$ (respectively, $[\alpha]_{A,b}=[\beta]_{A,b}$), then we use the expression ``$\alpha$ and $\beta$ are homotopic in $(A,B,b)$ (respectively, in $(A,b)$)''. We write $e$ for the trivial element of the group $\pi_1(A,b)$ (i.e., the homotopy class of the constant loop based at $b$). As usual, $\pi_0(A)$ will denote the set of path-connected components of $A$.

For any map $g\colon (A,b)\to (A',b')$ of pointed sets (i.e., $g(b)=b'$), we denote by $g_\#\colon \pi_1(A,b)\to \pi_1(A',b')$ the homomorphism induced by $g$. 
By a \emph{natural} map, we mean the homomorphism induced by an inclusion map. 

Finally, we use standard notation from homotopy theory. For example, if $\alpha$ and $\beta$ are paths in $A$ with $\alpha(1)=\beta(0)$, then we write $\alpha\cdot\beta$ (or simply $\alpha\beta$) for the composition or product of $\alpha$ and $\beta$, which is defined by
\begin{align*}
\alpha \cdot \beta(t): = 
\left\{
\begin{aligned}
& \alpha(2t)&\mbox{ for }\quad  0\leq t\leq 1/2,\\
& \beta(2t-1) &\mbox{ for } \quad 1/2\leq t\leq 1;
\end{aligned}
\right.
\end{align*}
we denote by $\alpha^{-1}$ the ``inverse path'' to $\alpha$, which is defined by 
$\alpha^{-1}(t):=\alpha(1-t)$;
and so on. 

For further details in homotopy theory, we refer the reader 
for instance to \cite[\S 15]{St}.
\end{notation}

\section{Standard and relative monodromy variation operators}\label{sect-varop}

Let $X:=Y\setminus Z$ be a quasi-projective variety in $\mathbb{P}^n$ ($n\geq 2$)---that is, $Y$ is a non-empty closed algebraic subset of $\mathbb{P}^n$ and $Z$ is a proper closed algebraic subset of $Y$. Pick a Whitney stratification $\Xi$ of $Y$ such that $Z$ is a union of strata, and consider a projective hyperplane $L$ of $\mathbb{P}^n$ transverse to (the strata of) $\Xi$. (The choice of such a hyperplane is generic.) Then choose a pencil $\Pi$ of hyperplanes of $\mathbb{P}^n$ so that its base locus $\Pi_0$---which is also called the \emph{axis} of $\Pi$---is transverse to $\Xi$ and such that $L\in\Pi$. (The choice of such an $(n-2)$-plane $\Pi_0$ of $\mathbb{P}^n$ is generic inside the hyperplane $L$.) Then all the members of $\Pi$ are transverse to $\Xi$ except a finite number of them $L_1,\ldots,L_N$---so-called \emph{special} hyperplanes of $\Pi$. Furthermore, for each $L_i$ ($1\leq i\leq N$), there is only a finite number of points where $L_i$ is not transverse to $\Xi$. Let us denote by $\Sigma_i$ the set of such points, and let us write 
\begin{equation*}
\Sigma:=\bigcup_{1\leq i\leq N}\Sigma_i.
\end{equation*}
It is worth to observe that the intersection $\Sigma\cap\Pi_0$ is empty. Also, note that if $L'$ is not a special hyperplane of $\Pi$, then the pair $(L'\cap X,\Pi_0\cap X)$ is homeomorphic to the pair $(L\cap X,\Pi_0\cap X)$. For details we refer the reader to \cite{C4}.

Now parametrize the elements of $\Pi$ by $\mathbb{P}^1$ as usual, and write $\lambda$ (respectively,~$\lambda_i$) for the parameter corresponding to the generic hyperplane $L$ (respectively, to the special hyperplane $L_i$). For each $1\leq i\leq N$, pick a small closed disc $D_i\subseteq\mathbb{P}^1$ centred at $\lambda_i$ and fix a point $\ell_i$ on its boundary $\partial D_i$. Choose the $D_i$'s mutually disjoint. Finally, take a simple path $\rho_i$ in $\mathbb{P}^1$ joining $\lambda$ to $\ell_i$ so that:
\begin{enumerate}
\item
$\mbox{im}(\rho_i)\cap D_i=\{\ell_i\}$;
\item
$\mbox{im}(\rho_i)\cap \mbox{im}(\rho_j)=\{\lambda\}$ if $i\not=j$;
\item
$\mbox{im}(\rho_i)\cap D_j=\emptyset$ if $i\not=j$.
\end{enumerate}

\begin{notation}
For any subsets $E\subseteq\mathbb{P}^n$ and $\Lambda\subseteq\mathbb{P}^1$, we set
\begin{equation*}
E_\Lambda:=\bigcup_{\ell\in \Lambda} (\Pi(\ell)\cap E),
\end{equation*}
where $\Pi(\ell)$ is the member of $\Pi$ with parameter $\ell$. For example, $X_{\{\lambda\}}=\Pi(\lambda)\cap X=L\cap X$. Hereafter, to simplify, we shall write $X_\lambda$ instead of $X_{\{\lambda\}}$.
Also, because of a frequent use of the section of $X$ by the base locus $\Pi_0$ of the pencil, we set
\begin{equation*}
A:=\Pi_0\cap X.
\end{equation*}
Finally, throughout we shall write
\begin{equation*}
\mathbb{P}^{1*}:=\mathbb{P}^{1}\setminus\{\lambda_1,\ldots,\lambda_N\}.
\end{equation*}
\end{notation}

\subsection{Monodromy}
For each $1\leq i\leq N$, set $K_i:=\mbox{im}(\rho_i)\cup D_i$, choose a loop $\delta_i\colon I\to\partial D_i$ which runs once counterclockwise along the boundary of $D_i$, starting and ending at $\ell_i$, and consider the loop $\omega_i\colon I\to\partial K_i$ along the boundary $\partial K_i$ of $K_i$ defined by the composition 
\begin{equation*}
\omega_i:=\rho_i\delta_i\rho_i^{-1}.
\end{equation*}
(In particular, we have $\omega_i(0)=\omega_i(1)=\lambda$.)
By \cite[Lemma 4.1]{C1}, for each $i$, there is an isotopy
\begin{equation}\label{isotopyHi}
H_i\colon X_\lambda \times I \to X_{\partial K_i},
\quad (x,\tau)\mapsto H_i(x,\tau),
\end{equation}
satisfying the following properties:
\begin{enumerate}
\item
$H_i(x,0)=x$ for any $x\in X_\lambda$;
\item 
$H_i(x,\tau)\in X_{\omega_i(\tau)}$ for any $x\in X_\lambda$ and any $\tau\in I$;
\item
for each $\tau\in I$, the map $X_\lambda \to X_{\omega_i(\tau)}$, defined by $x\mapsto H_i(x,\tau)$, is a homeomorphism;
\item 
$H_i(x,\tau)=x$ for any $x\in A$ and any $\tau\in I$.
\end{enumerate}
The terminal homeomorphism $h_i\colon X_\lambda\to X_\lambda$ of the above isotopy, defined by
\begin{equation*}
x\mapsto h_i(x):=H_i(x,1)
\end{equation*}
leaves $A$ pointwise fixed. 

\begin{definition}
The map $h_i$ is called a \emph{monodromy} of $X_\lambda$ relative to $A$ above $\omega_i$. 
\end{definition}

\begin{remark}[\mbox{cf.~\cite[Lemma 4.3]{C1}}]\label{dohcoi}
Another choice of $\omega_i$ within the same homotopy class $[\omega_i]\in \pi_1(\mathbb{P}^{1*},\lambda)$ and another choice of $H_i$ as above would give a new monodromy isotopic to $h_i$ within $X_\lambda$ by an isotopy leaving $A$ pointwise fixed. In other words, the isotopy class of $h_i$ in $X_\lambda$ relative to $A$ is completely determined by $[\omega_i]$.
\end{remark}

\subsection{Standard monodromy variation operator}
We assume that $A\not=\emptyset$, and we fix a base point $x_0\in A$. 
As $h_i$ leaves $x_0$ fixed, it induces an automorphism
\begin{equation*}
h_{i\#}\colon \pi_1(X_\lambda,x_0) \overset{\sim}\rightarrow 
\pi_1(X_\lambda,x_0).
\end{equation*}
This automorphism in turn induces a map
\begin{align*}
\mbox{Var}_{h_i}\colon \pi_1(X_\lambda,x_0) & \to \pi_1(X_\lambda,x_0)\\ [\alpha] & \mapsto [\alpha]^{-1} h_{i\#}([\alpha])=[\alpha^{-1}\cdot h_i\circ\alpha]
\end{align*}
which only depends on the homotopy class $[\omega_i]\in \pi_1(\mathbb{P}^{1*},\lambda)$.

\begin{definition}
The map $\mbox{Var}_{h_i}$ is the standard monodromy variation operator associated to $[\omega_i]$.
\end{definition}

\subsection{Relative monodromy variation operator}
This operator was introduced by D. Ch\'eniot and the first author in \cite[\S 4]{CE}. Again let $x_0$ be a base point in $A\not=\emptyset$.  Pick a \emph{relative} loop $\alpha\in F^1(X_\lambda,A,x_0)$, and consider the map $\alpha^{-1}\cdot h_i\circ\alpha$ defined on~$I$ by
\begin{align}\label{comprelloop-2}
\alpha^{-1}\cdot h_i\circ\alpha(t):= 
\left\{
\begin{aligned}
& \alpha^{-1}(2t):=\alpha(1-2t)&\mbox{ for }\quad  0\leq t\leq 1/2,\\
& h_i\circ\alpha(2t-1) &\mbox{ for } \quad 1/2\leq t\leq 1.
\end{aligned}
\right.
\end{align}
As $h_i$ is the identity on $A$, this map is well defined and belongs to $F^1(X_\lambda,x_0)$, that is, $\alpha^{-1}\cdot h_i\circ\alpha$ is an \emph{absolute} loop.
(Again observe that in the special case where $\alpha$ is an absolute loop, the relation (\ref{comprelloop-2}) is nothing but the standard composition of the absolute loops $\alpha^{-1}$ and $h\circ\alpha$.) By \cite[Lemma 4.1]{CE}, the correspondence
\begin{align*}
\mbox{Var}^{\, \mbox{\tiny rel}}_{h_i}\colon \pi_1(X_\lambda,A,x_0) & \to \pi_1(X_\lambda,x_0)\\ 
[\alpha]_{X_\lambda,A,x_0} & \mapsto 
[\alpha^{-1}\cdot h_i\circ\alpha]_{X_\lambda,x_0}
\end{align*}
is well defined and only depends on the homotopy class $[\omega_i]\in \pi_1(\mathbb{P}^{1*},\lambda)$. 

\begin{definition}
The map $\mbox{Var}^{\, \mbox{\tiny rel}}_{h_i}$ is the \emph{relative} monodromy variation operator associated to $[\omega_i]$. (In \cite{CE}, the map $\mbox{Var}^{\, \mbox{\tiny rel}}_{h_i}$ is denoted by $\mbox{VAR}_{i,1}$.)
\end{definition}

\begin{remark}
Relative variation operators can also be defined for higher homotopy groups. For details we refer the reader to \cite[\S 4]{CE}.
\end{remark}

\section{Statements of the main results}\label{sect-smr}

By \cite[Th\'eor\`eme 2.5]{E2}, if $(X,X\setminus\Sigma)$ is $1$-connected\footnote{In the terminology of \cite{E2}, the $1$-connectivity of the pair $(X,X\setminus\Sigma)$ corresponds to the assumption that the global rectified homotopical depth of $X$ along $\Sigma\cap X$ is greater than or equal to $2$. This assumption is a measure of the degree of singularity of $X$. For example, \cite[Corollary 3.2.2]{HL2} and \cite[Th\'eor\`eme 3.11]{E4} show that if $X$ is locally a complete intersection of pure dimension $2$, then the global rectified homotopical depth of $X$ along $\Sigma\cap X$ is greater than or equal to $2$.} and $(X_\lambda, A)$ is $0$-connected, then $(X,X_\lambda)$ is $1$-connected. In particular, the set $A$ is not empty, and for any base point $x_0\in A$, the natural map
\begin{equation*}
\pi_q(X_\lambda,x_0) \to \pi_q(X,x_0)
\end{equation*}
is bijective for $q=0$ and surjective for $q=1$. Concerning the kernel of $\pi_1(X_\lambda,x_0) \to \pi_1(X,x_0)$, the first author also obtained---in \cite{E1}---the following result which was the first generalization of the Zariski-van Kampen theorem to singular (quasi-projective) varieties.

\begin{theorem}[cf.~{\cite[Theorem 5.1]{E1}}]\label{Eyral-1}
Assume that $X\setminus \Sigma\not=\emptyset$. Furthermore, suppose that for any base point $y_0\in X\setminus \Sigma$, the following three conditions hold:
\begin{enumerate}
\item
$\pi_q(X\setminus \Sigma,y_0) \to \pi_q(X,y_0)$ is bijective for $q\in\{0,1\}$;\footnote{For $q=0$, the bijectivity of $\pi_0(X\setminus \Sigma,y_0) \to \pi_0(X,y_0)$ is equivalent to that of $\pi_0(X\setminus \Sigma) \to \pi_0(X)$. Indeed, by definition, $\pi_0(X\setminus \Sigma,y_0)$ is nothing but the \emph{pointed} set of the path-connected components of $X\setminus \Sigma$, where the ``point'' is the path-connected component containing $y_0$. Similarly for $\pi_0(X,y_0)$.}
\item
$\pi_0(A) \to \pi_0(X_\lambda)$ is bijective;
\item
$\pi_0(A) \to \pi_0(X_{\lambda_i}\setminus \Sigma_i)$ is surjective for all $1\leq i\leq N$.
\end{enumerate}
Then, for any base point $x_0\in A$, the map
\begin{equation*}
\pi_q(X_\lambda,x_0) \to \pi_q(X,x_0)
\end{equation*}
is bijective for $q=0$ and surjective for $q=1$ (as observed above under weaker assumptions); moreover, the kernel of $\pi_1(X_\lambda,x_0) \to \pi_1(X,x_0)$ coincides with the normal subgroup $\overline{\bigcup_{1\leq i\leq N} \mbox{\emph{im}}(\mbox{\emph{Var}}_{h_i})}$
of $\pi_1(X_\lambda,x_0)$ generated by the union of the images of the standard monodromy variation operators. In particular, for any $x_0\in A$, there is an isomorphism
\begin{equation*}
\pi_1(X_\lambda,x_0)\bigg/ \overline{\bigcup_{1\leq i\leq N} \mbox{\emph{im}}(\mbox{\emph{Var}}_{h_i})} \overset{\sim}{\longrightarrow} \pi_1(X,x_0).
\end{equation*}
\end{theorem}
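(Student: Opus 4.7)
The plan is to separate the theorem into two statements: the surjectivity in degrees $0,1$ and the identification of the kernel, and then split the kernel identification into the two customary inclusions. The connectivity claim itself reduces to the Lefschetz-type result stated just before the theorem, since assumptions (1) and (2) are strong enough to force $(X,X_\lambda)$ to be $1$-connected after one verifies that the pair $(X, X\setminus\Sigma)$ is $1$-connected (from (1)) and that $(X_\lambda, A)$ is $0$-connected (from (2)). The real content is therefore the equality of the kernel with the normal subgroup $N:=\overline{\bigcup_{i}\mbox{im}(\mbox{Var}_{h_i})}$.

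The inclusion $N\subseteq \ker$ is the easy direction and follows from the construction in Section~\ref{sect-varop}. The isotopy $H_i\colon X_\lambda\times I\to X_{\partial K_i}\subseteq X$ fixes $x_0$ (because $x_0\in A$ and $H_i$ is the identity on $A$), so the map $(s,\tau)\mapsto H_i(\alpha(s),\tau)$ is a based homotopy in $X$ from the loop $\alpha$ to the loop $h_i\circ\alpha$. Hence $[\alpha^{-1}\cdot h_i\circ\alpha]=e$ in $\pi_1(X,x_0)$, so every $\mbox{Var}_{h_i}([\alpha])$ lies in $\ker$, and normality gives $N\subseteq \ker$.

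For the harder inclusion $\ker\subseteq N$, I would start with a loop $\alpha\in F^1(X_\lambda,x_0)$ which is null-homotopic in $X$ and produce a word in the $\mbox{Var}_{h_i}$'s representing $[\alpha]$. Using assumption (1) I can replace $X$ by $X\setminus \Sigma$ throughout without changing $\pi_1$. Over the Zariski open set $\mathbb{P}^{1*}=\mathbb{P}^1\setminus\{\lambda_1,\ldots,\lambda_N\}$ the pencil projection restricts to a stratified topological fibre bundle $X_{\mathbb{P}^{1*}}\setminus\Sigma\to \mathbb{P}^{1*}$ by Thom's first isotopy lemma, so I can invoke the homotopy long exact sequence of that fibration. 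A chosen null-homotopy $F\colon (D^2,\partial D^2)\to (X\setminus\Sigma, X_\lambda)$ can then be put in general position with respect to the pencil, so that the composition with the pencil projection $X\setminus\Sigma\to\mathbb{P}^1$ has only finitely many singular values and no critical behaviour over a small neighbourhood of each $\lambda_i$ except a controlled one. Cutting $D^2$ along arcs that cover the paths $\rho_i\delta_i\rho_i^{-1}$ yields a decomposition of $[\alpha]$ as a product of conjugates of loops $\beta_i^{-1}\cdot h_i\circ\beta_i$ for loops $\beta_i\in F^1(X_\lambda,x_0)$ read off from the cells. Here assumptions (2) and (3) enter in an essential way: (2) ensures that we can transport base points from any component of $X_\lambda$ back to $x_0$ inside $A$, and (3) guarantees that the local combinatorial piece encircling $\lambda_i$ does not contribute any $\pi_0$-obstruction on $X_{\lambda_i}\setminus\Sigma_i$, which is what allows it to be written as an element of $\mbox{im}(\mbox{Var}_{h_i})$ rather than merely as a free homotopy class.

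The main obstacle will be the combinatorial/topological bookkeeping in that last step, namely the precise identification of the local contributions near the preimages $F^{-1}(X_{\lambda_i})$ with elements of $\mbox{im}(\mbox{Var}_{h_i})$. This is the analogue, in the singular setting, of the classical van Kampen argument and requires the extension and isotopy lemmas of Ch\'eniot (e.g.\ \cite[Lemma 4.1]{C1}), which let one replace the naive free homotopy coming from the pencil by a based one using paths lying in $A$. The hypotheses (1)--(3) are exactly what makes these extensions available; once they are in place, the decomposition goes through and the hard inclusion follows.
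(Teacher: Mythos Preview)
Your overall strategy is the right one and matches the route taken in \cite{E1} and in this paper's proof of the more general Theorem~\ref{mt}: reduce to $\Sigma\subseteq Z$ via assumption~(1), get the easy inclusion from the isotopy $H_i$, and for the hard inclusion push a null-homotopy disc into general position relative to the special fibres, then decompose the boundary as a product of conjugates of monodromy variations. Your reading of where hypotheses (2) and (3) enter is also correct.

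There is, however, a genuine technical gap. You repeatedly invoke ``the pencil projection $X\setminus\Sigma\to\mathbb{P}^1$'' and the fibration ``$X_{\mathbb{P}^{1*}}\setminus\Sigma\to\mathbb{P}^{1*}$'', but no such map exists: every point of the axis $A=\Pi_0\cap X$ lies in \emph{all} members of the pencil, so the projection is undefined on $A$. The paper (following \cite{C4,C1,E1,E2}) repairs this by blowing up $\mathbb{P}^n$ along $\Pi_0$; on the blown-up space $\widetilde X$ the projection $p\colon\widetilde X\to\mathbb{P}^1$ is honest, its restriction $p^*\colon\widetilde X^*\to\mathbb{P}^{1*}$ is a locally trivial bundle with $A\times\mathbb{P}^{1*}$ as a trivial subbundle, and one then has to check (this is nontrivial, cf.~\cite[\S8]{E2}) that $f$ induces an isomorphism $\pi_1(\widetilde X,(x_0,\lambda))\overset{\sim}{\to}\pi_1(X,x_0)$. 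All of your disc-cutting and general-position arguments should really be carried out in $\widetilde X$, not in $X$; the semi-direct product structure you implicitly use (long exact sequence of the fibration plus a section) lives upstairs as well. Once you insert the blow-up, your sketch becomes essentially the argument of \cite{E1} (and of Section~\ref{proof-mt} here, specialised to the case where $\pi_0(A)\to\pi_0(X_\lambda)$ is bijective so that the points $x_k$ can all be taken equal to $x_0$ and relative loops are unnecessary).
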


In this theorem, all the maps are induced by inclusions. 

Under the same assumptions, the standard operators $\mbox{Var}_{h_i}$ can be replaced by the relative operators $\mbox{Var}^{\, \mbox{\tiny rel}}_{h_i}$ (cf.~\cite[\S 6]{E1}). Note that, in general, the normal subgroup generated by the images of the standard operators is (a priori) smaller than the normal subgroup generated by the images of the relative operators.

The first and the second assumptions in Theorem \ref{Eyral-1} are just natural extensions of the hypotheses of \cite[Th\'eor\`eme 2.5]{E2} that we have mentioned above.
Indeed, a pair $(U,V)$ of topological spaces (where $V$ is a non-empty subspace of $U$) is $q_0$-connected if and only if for every base point $v\in V$ the natural map $\pi_q(V,v) \to \pi_q(U,v)$ is bijective for $0\leq q\leq q_0-1$ and surjective for $q=q_0$. 

In the special case of \emph{non-singular} varieties, the statements of the above mentioned results (i.e., \cite[Th\'eor\`eme 2.5]{E2} and \cite[Theorem 5.1]{E1}) can be simplified. Indeed, by \cite[Th\'eor\`eme 4.3]{E3}, if $X$ is non-singular, then the pair $(X,X\setminus\Sigma)$ is $(2d-1)$-connected, where $d$ is the smallest dimension of the irreducible components of $Y$ not contained in $Z$. Therefore, if furthermore $d\geq 1$, then Th\'eor\`eme 2.5 of \cite{E2} simplifies as follows: ``If $X$ is non-singular, if $d\geq 1$ and if the pair $(X_\lambda, A)$ is $0$-connected, then the pair $(X,X_\lambda)$ is $1$-connected''. Note that if in addition $d\geq 2$, then $X_\lambda$ and $A$ are non-singular too and the smallest dimension of the irreducible components of $Y_\lambda$ not contained in $Z_\lambda$ is greater than or equal to $1$ (cf.~\cite[Lemme 11.3]{C4}), and hence, by the non-singular version of the Lefschetz hyperplane section theorem (cf.~\cite{E2,GM1,GM2,HL}), the pair $(X_\lambda,A)$ is always $0$-connected. Moreover, in this case, Theorem \ref{Eyral-1} can also be simplified as follows.

\begin{theorem}[{cf.~\cite[Corollary 5.3]{E1}}]\label{Eyral-2}
Assume that $X$ is non-singular and $d\geq 2$. Under these assumptions, if furthermore $\pi_0(A) \to \pi_0(X_\lambda)$ is injective (and hence bijective), then, for any base point $x_0\in A$, there is an isomorphism
\begin{equation}\label{isozvknsv}
\pi_1(X_\lambda,x_0)\bigg/ \overline{\bigcup_{1\leq i\leq N} \mbox{\emph{im}}(\mbox{\emph{Var}}_{h_i})} \overset{\sim}{\longrightarrow} \pi_1(X,x_0).
\end{equation}
\end{theorem}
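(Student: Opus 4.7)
The plan is to deduce Theorem \ref{Eyral-2} from Theorem \ref{Eyral-1} by checking its three hypotheses under the stronger assumptions that $X$ is non-singular and $d\geq 2$. Hypothesis (2), the bijectivity of $\pi_0(A)\to\pi_0(X_\lambda)$, is precisely the assumption of Theorem \ref{Eyral-2} (surjectivity is automatic from the non-singular Lefschetz-type discussion recalled immediately before the statement). It therefore suffices to verify hypotheses (1) and (3).

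For hypothesis (1), I would invoke \cite[Théorème 4.3]{E3} directly: since $X$ is non-singular, the pair $(X,X\setminus\Sigma)$ is $(2d-1)$-connected, and because $d\geq 2$ this means it is at least $3$-connected. In particular, for every base point $y_0\in X\setminus\Sigma$, the natural map $\pi_q(X\setminus\Sigma,y_0)\to\pi_q(X,y_0)$ is bijective for $q\in\{0,1\}$, as required. Non-emptiness of $X\setminus\Sigma$ is automatic because $X$ has positive complex dimension while $\Sigma=\bigcup_i\Sigma_i$ is a finite set of points.

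The step I expect to be the main obstacle is hypothesis (3), namely the surjectivity of $\pi_0(A)\to\pi_0(X_{\lambda_i}\setminus\Sigma_i)$ for each $1\leq i\leq N$. My plan is to apply the non-singular Lefschetz hyperplane section theorem to the pair $(X_{\lambda_i}\setminus\Sigma_i,A)$. By the very definition of $\Sigma_i$, the hyperplane $L_i$ is transverse to the Whitney stratification $\Xi$ outside $\Sigma_i$, so $X_{\lambda_i}\setminus\Sigma_i$ is a non-singular quasi-projective variety inside $L_i\cong\mathbb{P}^{n-1}$. Since $\Pi_0\subset L_i$ is itself transverse to $\Xi$ and $\Sigma\cap\Pi_0=\emptyset$, one has $\Pi_0\cap(X_{\lambda_i}\setminus\Sigma_i)=\Pi_0\cap X=A$, so $\Pi_0$ can be treated as a generic hyperplane of $L_i$ relative to $X_{\lambda_i}\setminus\Sigma_i$. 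An analogue of \cite[Lemme 11.3]{C4} at the special parameter $\lambda_i$ should yield that the smallest dimension of the irreducible components of $Y_{\lambda_i}$ not contained in $Z_{\lambda_i}$ is at least $d-1\geq 1$; the non-singular Lefschetz theorem then gives $0$-connectedness of $(X_{\lambda_i}\setminus\Sigma_i,A)$, which is the desired surjectivity. The delicate point here is checking that the transversality and dimensional bookkeeping available in \cite{C4} for generic hyperplanes of the pencil carries over to the special hyperplanes $L_i$ once the finite bad set $\Sigma_i$ is deleted. Once this is in place, Theorem \ref{Eyral-1} applies and delivers the isomorphism \eqref{isozvknsv}.
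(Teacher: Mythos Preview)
Your proposal is correct and follows essentially the same route as the paper: hypothesis~(1) is handled via \cite[Th\'eor\`eme 4.3]{E3} exactly as in the discussion preceding the statement, and hypothesis~(3) is dispatched in the paper's Remark immediately after the statement by the same non-singular Lefschetz argument you outline, citing \cite[Lemme 11.3]{C4} directly (with $Z$ replaced by $Z\cup\Sigma_i$) rather than an analogue, which removes your residual worry about the special parameter.
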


\begin{remark}
In \cite[Corollary 5.3]{E1}, it is assumed that the maps $\pi_0(A)\rightarrow\pi_0(X_{\lambda_i}\setminus\Sigma_i)$ are surjective for all $1\leq i\leq N$. Actually, this assumption is redundant (it is always satisfied). Indeed, when $X$ is non-singular and $d\geq 2$, the subset $X_{\lambda_i}\setminus\Sigma_i$ is non-singular and the smallest dimension of the irreducible components of $Y_{\lambda_i}$ not contained in $(Z\cup\Sigma_i)_{\lambda_i}$ is greater than or equal to $1$ (cf.~\cite[Lemme 11.3]{C4}). Therefore, by the non-singular version of the Lefschetz hyperplane section theorem, the pair $(X_{\lambda_i}\setminus\Sigma_i,A)$ is always $0$-connected.
\end{remark}

\begin{remark}
In \cite{S1,S2}, I.~Shimada proves isomorphism (\ref{isozvknsv}) of Theorem \ref{Eyral-2} (i.e., the non-singular case) under different assumptions. Namely, he supposes that $X$ is connected and that the special sections $X_{\lambda_i}$ ($1\leq i\leq N$) are \emph{irreducible}. In particular, \cite{S1,S2} already contain Theorem \ref{Eyral-2} in the special case where the non-singular variety $X$ is connected.
\end{remark}

Note that, by the non-singular version of the Lefschetz hyperplane section theorem, if $X$ is non-singular, then the pair $(X,X_\lambda)$ is $(d-1)$-connected. In particular, if furthermore $d\geq 3$, then there is an isomorphism $\pi_1(X_\lambda,x_0)\overset{\sim}{\rightarrow} \pi_1(X,x_0)$. Thus Theorem \ref{Eyral-2} gives new information only in the case where $d=2$. Also, observe that in the special case where $Y=\mathbb{P}^2$ and $Z$ is an algebraic curve, the spaces $A$ and $X_\lambda$ are path-connected, and hence the map $\pi_0(A) \to \pi_0(X_\lambda)$ is always injective. Thus, in this case, Theorem \ref{Eyral-2} reduces to the classical Zariski-van Kampen theorem \cite{vK,Z}.

Now, if the map $\pi_0(A) \to \pi_0(X_\lambda)$ involved in Theorem \ref{Eyral-2} is \emph{not} injective, then, in general, without any further assumption, it seems that the normal subgroup generated by the standard monodromy variation operators is not big enough to contain the entire kernel of the map $\pi_1(X_\lambda,x_0) \to \pi_1(X,x_0)$. (However this should be contrasted with \cite[Theorem 3.2]{Libgober}.) Suppose, for instance, that $X_\lambda$ has a (path-connected) component $X_\lambda'$ that contains at least two components $A_0$ and $A_1$ of $A$. Pick points $x_0\in A_0$ and $x_1\in A_1$. By \cite[Lemma 4.8]{CE}, if $\alpha$ is any relative loop of $F^1(X_\lambda,A,x_0)$ such that $\alpha(0)=x_1$, then, for each $1\leq i\leq N$, its variation $\mbox{Var}^{\, \mbox{\tiny rel}}_{h_i}([\alpha])$ must be in the kernel of the map $\pi_1(X_\lambda,x_0) \to \pi_1(X,x_0)$. It is unclear to the authors whether or not $\mbox{Var}^{\, \mbox{\tiny rel}}_{h_i}([\alpha])$ can always be described in terms of (a product of possibly negative powers of) the standard monodromy variation operators.  In \cite{CE}, D.~Ch\'eniot and the first author conjectured that the conclusion of Theorem \ref{Eyral-2} still hold true---even when the map $\pi_0(A) \to \pi_0(X_\lambda)$ is \emph{not} injective---provided that we replace the standard monodromy variation operators by the relative ones. 
(In fact, the conjecture in \cite{CE} is much more general, as it also includes a similar statement for higher homotopy groups.)
Our first result says that (the $\pi_1$ part of) this conjecture is true. More precisely, we have the~following~statement.

\begin{theorem}\label{fmt}
Assume that $X$ is non-singular and $d\geq 2$. Under these assumptions, for any base point $x_0\in A$, there is an isomorphism
\begin{equation*}
\pi_1(X_\lambda,x_0)\bigg/ \overline{\bigcup_{1\leq i\leq N} \mbox{\emph{im}}(\mbox{\emph{Var}}^{\, \mbox{\tiny \emph{rel}}}_{h_i})} 
\overset{\sim}{\longrightarrow} \pi_1(X,x_0),
\end{equation*}
where $\overline{\bigcup_{1\leq i\leq N} \mbox{\emph{im}}(\mbox{\emph{Var}}^{\, \mbox{\tiny \emph{rel}}}_{h_i})}$ is the normal subgroup of $\pi_1(X_\lambda,x_0)$ generated by the union of the images of the relative monodromy variation operators. 
\end{theorem}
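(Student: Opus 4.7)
The plan is to split the proof into the ``easy'' inclusion that $N := \overline{\bigcup_i \mbox{im}(\mbox{Var}^{\, \mbox{\tiny rel}}_{h_i})}$ lies in $\ker(\pi_1(X_\lambda,x_0) \to \pi_1(X,x_0))$ (together with surjectivity of this map), and the ``hard'' reverse inclusion. For surjectivity I would invoke \cite[Th\'eor\`eme 4.3]{E3}, which under our hypotheses gives that $(X, X\setminus\Sigma)$ is $(2d-1)$-connected and hence $1$-connected since $d \ge 2$; combined with the $0$-connectedness of $(X_\lambda, A)$ (non-singular Lefschetz theorem via \cite[Lemme 11.3]{C4}) and \cite[Th\'eor\`eme 2.5]{E2}, this shows $(X, X_\lambda)$ is $1$-connected, so $\pi_1(X_\lambda, x_0) \to \pi_1(X, x_0)$ is surjective. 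For $N \subseteq \ker$, I would observe, following \cite[Lemma 4.8]{CE}, that for any $\alpha \in F^1(X_\lambda, A, x_0)$ the two-parameter map $(s,t) \mapsto H_i(\alpha(s), t)$ takes values in $X_{K_i} \subseteq X$ and directly furnishes a null-homotopy of $\alpha^{-1} \cdot h_i \circ \alpha$ in $X$, using that $H_i$ fixes $A \ni x_0$ pointwise.

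The main task is injectivity. Given $[\gamma] \in \ker$, I would fix a null-homotopy $F \colon I^2 \to X$ of $\gamma$ and put it in general position with respect to the pencil projection $p \colon X \setminus A \to \mathbb{P}^1$, so that $(p \circ F)^{-1}(\{\lambda_1, \dots, \lambda_N\})$ is finite and $F^{-1}(A)$ is a tame configuration of isolated interior points and boundary arcs. A triangulation of $I^2$ compatible with the pullback of $\bigcup_i (\mbox{im}(\rho_i) \cup \partial K_i)$ then decomposes $[\gamma]$ as a product of conjugates of elementary loops, each encircling a single $\lambda_i$. For each such contribution, the isotopy $H_i$ transports an arc $\alpha$ in $X_\lambda$ to its monodromy image $h_i \circ \alpha$; after conjugation by a connecting path, the elementary loop takes the form $[\alpha^{-1} \cdot h_i \circ \alpha]$.

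The principal obstacle is the treatment of $F^{-1}(A)$: unlike in the classical setting these punctures cannot be removed by perturbation, because $F$ takes values in all of $X$ and $p$ is undefined on $A$. At such a point the connecting arc $\alpha$ above begins on $A$ rather than at $x_0$, so $\alpha$ is a genuinely \emph{relative} loop in $F^1(X_\lambda, A, x_0)$; since $h_i$ is the identity on $A$, the composition (\ref{comprelloop-2}) is a well-defined absolute loop based at $x_0$, and the elementary contribution is an element of $\mbox{im}(\mbox{Var}^{\, \mbox{\tiny rel}}_{h_i})$. The hard part will be making this analysis rigorous — simultaneously controlling the interaction of $F$ with $A$ and with the special fibres, and verifying that the decomposition produces no relations beyond the relative variations — which is exactly where the Ch\'eniot--Eyral relative formalism supersedes the absolute one. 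Once the decomposition is carried out, $[\gamma]$ appears as a product of conjugates of elements of $\bigcup_i \mbox{im}(\mbox{Var}^{\, \mbox{\tiny rel}}_{h_i})$, completing the proof; in the special case where $\pi_0(A) \to \pi_0(X_\lambda)$ is injective no genuinely relative loops arise and the argument specialises to Theorem \ref{Eyral-2}.
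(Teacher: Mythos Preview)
Your surjectivity argument and the inclusion $N \subseteq \ker$ are fine and match the paper. The gap is in the hard direction, where you misidentify the source of the relative loops. Your claim that $F^{-1}(A)$ ``cannot be removed by perturbation'' is wrong: $A = \Pi_0 \cap X$ has complex codimension $2$ in $X$, hence real codimension $4$, so a generic $2$-cell $F\colon I^2 \to X$ misses $A$ entirely (this is in fact what underlies the isomorphism $\pi_1(\widetilde{X},(x_0,\lambda)) \cong \pi_1(X,x_0)$ of \cite[\S 8]{E2}, where $\widetilde{X}$ is the blow-up along $\Pi_0$). Thus your proposed mechanism for producing relative loops---connecting arcs starting at points of $F^{-1}(A)$---evaporates, and with it your explanation of why anything beyond the absolute variations should appear.

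The paper's argument is structurally different: it passes to the blow-up, where there is an honest locally trivial fibration $p^*\colon \widetilde{X}^* \to \mathbb{P}^{1*}$ with trivial subbundle $A \times \mathbb{P}^{1*}$, yielding a semi-direct product description of $\pi_1(\widetilde{X}^*,(x_0,\lambda))$ (Proposition~\ref{fgxts}). For a $2$-cell $\beta$ in $\widetilde{X}$ meeting the special fibres at isolated points $P_k$, the small loop $\beta \circ \gamma_k$ is freely homotoped---using the surjectivity of $\pi_0(A) \to \pi_0(X_{\lambda_i}\setminus\Sigma_i)$---into $\{x_k\}\times\partial D_{i(k)}$ for some $x_k \in A$ (Sublemma~\ref{lemma-freehomotopy}). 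The crux is that $x_k$ need not lie in the component of $A$ containing $x_0$; one only knows, via the Lefschetz theorem for $\pi_0$, that $x_k$ and $x_0$ lie in the same component of $X_\lambda$ (Claim~\ref{existence-rel-loop}). The path $\sigma_k$ in $\widetilde{X}_\lambda$ from $(x_k,\lambda)$ to $(x_0,\lambda)$ is the genuine relative loop, and the conjugated contribution $\sigma_k^{-1}\cdot(x_k,\omega_{i(k)})\cdot\sigma_k$ is then rewritten as a relative variation (Lemma~\ref{lemma-fget-2}). Working directly in $X$ as you propose loses the fibration structure---the pencil members all intersect along $A$---and without it neither the decomposition into elementary loops nor their identification with relative variations is available.
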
 

Note that in the special case where $Y=\mathbb{P}^2$ and $Z$ is an algebraic curve, the set $A=\Pi_0\cap (\mathbb{P}^2\setminus Z)$ reduces to $\{x_0\}$ and the relative homotopy set $\pi_1(X_\lambda, \{x_0\},x_0)$ is nothing but the fundamental group $\pi_1(X_\lambda,x_0)$, so that in this case the operators $\mbox{Var}_{h_i}$ and $\mbox{Var}_{h_i}^{\, \mbox{\tiny rel}}$ coincide. In other words, when $Y=\mathbb{P}^2$ and $Z$ is an algebraic curve, Theorem \ref{fmt} also reduces to the classical Zariski-van Kampen theorem.

In fact, Theorem \ref{fmt} is an immediate corollary of our main result, which includes \emph{singular} varieties. Here is the precise statement.

\begin{theorem}\label{mt}
Assume that $X\setminus \Sigma\not=\emptyset$. Furthermore, suppose that for any base point $y_0\in X\setminus \Sigma$, any $q\in\{0,1\}$ and any integer $1\leq i\leq N$, the map
\begin{equation*}
\pi_q(X\setminus \Sigma,y_0) \to \pi_q(X,y_0)
\end{equation*}
is bijective, and the maps
\begin{equation*}
\pi_0(A) \to \pi_0(X_\lambda)
\quad\mbox{and}\quad
\pi_0(A) \to \pi_0(X_{\lambda_i}\setminus \Sigma_i)
\end{equation*}
are surjective (i.e., the pairs $(X_\lambda,A)$ and $(X_{\lambda_i}\setminus \Sigma_i,A)$ are $0$-connected).
Then, for any base point $x_0\in A$, there is an isomorphism
\begin{equation*}
\pi_1(X_\lambda,x_0)\bigg/ \overline{\bigcup_{1\leq i\leq N} 
\mbox{\emph{im}}(\mbox{\emph{Var}}^{\, \mbox{\tiny \emph{rel}}}_{h_i})} 
\overset{\sim}{\longrightarrow} \pi_1(X,x_0).
\end{equation*}
\end{theorem}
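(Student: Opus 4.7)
The plan is to establish the two containments for the kernel of the map $\pi_1(X_\lambda,x_0)\to\pi_1(X,x_0)$. Surjectivity of this map is immediate from \cite[Th\'eor\`eme 2.5]{E2} once one observes that the stated hypotheses make the pair $(X,X_\lambda)$ $1$-connected. For the inclusion $\overline{\bigcup_i\mbox{im}(\mbox{Var}^{\, \mbox{\tiny rel}}_{h_i})}\subseteq\ker$, which is essentially \cite[Lemma 4.8]{CE}, I would argue directly using the isotopy $H_i$ of \eqref{isotopyHi}: the map $(s,\tau)\mapsto H_i(\alpha(s),\tau)$ from $I\times I$ into $X_{\partial K_i}\subset X$ is a homotopy in $X$ from $\alpha$ to $h_i\circ\alpha$ which fixes the two endpoints, because $\alpha(0)\in A$ and $\alpha(1)=x_0\in A$ are pointwise fixed by $H_i$; therefore $\alpha^{-1}\cdot h_i\circ\alpha$ is nullhomotopic in $X$.

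The content of the theorem is the reverse inclusion $\ker\subseteq\overline{\bigcup_i\mbox{im}(\mbox{Var}^{\, \mbox{\tiny rel}}_{h_i})}$, for which the hypotheses give $\pi_0(A)\to\pi_0(X_\lambda)$ only surjective rather than bijective as in Theorem \ref{Eyral-1}. The strategy is to reduce to Theorem \ref{Eyral-1} by accounting for the failure of injectivity with extra generators. Label the components of $A$ by $A_0,A_1,\ldots$ with $x_0\in A_0$ and, for each component $A_j$ lying in the same component of $X_\lambda$ as $A_0$, pick a point $x_j\in A_j$ and a path $\beta_j\subset X_\lambda$ from $x_0$ to $x_j$; set $v_{ij}:=\mbox{Var}^{\, \mbox{\tiny rel}}_{h_i}([\beta_j^{-1}])\in\pi_1(X_\lambda,x_0)$. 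For an arbitrary relative loop $\alpha$ with $\alpha(0)\in A_j$, writing $\gamma:=\beta_j\cdot\alpha$ for the associated absolute loop based at $x_0$, an elementary path calculation exploiting that $h_i$ fixes both $x_0$ and $x_j$ shows that
\[
\alpha^{-1}\cdot h_i\circ\alpha\;\simeq\;\gamma^{-1}\cdot v_{ij}\cdot\gamma\cdot\gamma^{-1}\cdot h_i\circ\gamma.
\]
Consequently $N:=\overline{\bigcup_i\mbox{im}(\mbox{Var}^{\, \mbox{\tiny rel}}_{h_i})}$ coincides with the normal subgroup of $\pi_1(X_\lambda,x_0)$ generated by the images $\mbox{im}(\mbox{Var}_{h_i})$ of the standard variations together with the finite collection $\{v_{ij}\}$.

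The main obstacle will be showing that this combined subgroup is all of $\ker$. I would enlarge the base set to $\tilde A:=A\cup\bigcup_j\beta_j(I)$, so that $\pi_0(\tilde A)\to\pi_0(X_\lambda)$ becomes bijective by construction, and then adapt the Lefschetz-style van Kampen argument of \cite{E1} to this enlarged setting. The delicate point is that $\tilde A$ is no longer a hyperplane section and the monodromies $h_i$ no longer fix it pointwise: $h_i$ transports each $\beta_j$ to a possibly different path $h_i\circ\beta_j$ with the same endpoints. This is precisely the origin of the extra generators $v_{ij}$---each $v_{ij}$ records the discrepancy between $\beta_j$ and $h_i\circ\beta_j$---and adjoining them to the relations is exactly what compensates for their non-monodromy-invariance. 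Concretely, I would trace through the proof of Theorem \ref{Eyral-1} and verify that every step in which the injectivity of $\pi_0(A)\to\pi_0(X_\lambda)$ was used to join two components of $A$ by a path inside $A$ admits a substitute in which the auxiliary paths $\beta_j$ play that role, with the resulting monodromy discrepancy absorbed by the relations $v_{ij}$ in the quotient $\pi_1(X_\lambda,x_0)/N$.
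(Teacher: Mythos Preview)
Your preliminary steps are sound: surjectivity via \cite[Th\'eor\`eme 2.5]{E2}, the easy inclusion via the isotopy $H_i$, and the decomposition showing that $N$ equals the normal closure of the standard variations together with the finitely many $v_{ij}$ are all correct and useful observations.

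The genuine gap is in the reverse inclusion. Your proposal ends with ``I would trace through the proof of Theorem~\ref{Eyral-1} and verify\ldots'', which is a plan, not an argument. More seriously, the device of enlarging $A$ to $\tilde A=A\cup\bigcup_j\beta_j(I)$ does not interface with the machinery of \cite{E1}: that proof runs through the blowup of $\mathbb{P}^n$ along $\Pi_0$ and the resulting locally trivial fibration $\widetilde X^*\to\mathbb{P}^{1*}$, and $A=\Pi_0\cap X$ enters precisely as the trivial subbundle $A\times\mathbb{P}^{1*}$ fixed by every monodromy. Your $\tilde A$ is neither a hyperplane section nor monodromy-invariant, so none of the fibration-theoretic statements (the semi-direct product description of $\pi_1(\widetilde X^*)$, the identification of the kernel of $\pi_1(\widetilde X^*)\to\pi_1(\widetilde X)$) transfer to it. Saying the discrepancies are ``absorbed by the $v_{ij}$'' is the right intuition but does not substitute for an argument.

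What the paper actually does is closer to your intuition than to your stated strategy. It does not reduce to Theorem~\ref{Eyral-1}; it re-runs the entire blowup/fibration argument, keeping $A$ as is. The key point where injectivity of $\pi_0(A)\to\pi_0(X_\lambda)$ was used in \cite{E1} is isolated (Sublemma~\ref{lemma-freehomotopy} and the remark following it): a certain loop $\beta\circ\gamma_k$ is shown to be freely homotopic in $\widetilde X^*$ to a loop at $(x_k,\ell_{i(k)})$ with $x_k\in A$ possibly different from $x_0$. One then needs a path $\sigma_k$ in $\widetilde X_\lambda$ from $(x_k,\lambda)$ to $(x_0,\lambda)$---this is exactly your $\beta_j$---whose existence requires a separate appeal to \cite[Th\'eor\`eme 2.5]{E2} (Claim~\ref{existence-rel-loop}), and which is a \emph{relative} loop in $F^1(\widetilde X_\lambda,\widetilde A_\lambda,(x_0,\lambda))$. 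The kernel is then identified (Lemmas~\ref{lemma-fget} and~\ref{lemma-fget-2}) with the normal subgroup generated by $\alpha^{-1}\cdot(f\circ\alpha(0),\omega_i)\cdot\alpha$ over all relative $\alpha$, and a direct homotopy computation (Claim~\ref{sccp}) shows this equals the subgroup generated by the $(x_0,\omega_i)$ together with the relative variations $\alpha^{-1}\cdot\widetilde h_i\circ\alpha$. So your paths $\beta_j$ do appear, but as relative loops fed into the variation operator rather than as an enlargement of the base locus; carrying out your ``tracing'' rigorously would amount to reproducing this argument.
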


As in Theorems \ref{Eyral-1}, \ref{Eyral-2} and \ref{fmt}, all maps involved in Theorem \ref{mt} are induced by inclusions. Theorem \ref{mt} is a new generalization of the Zariski-van Kampen theorem to singular varieties.
It also generalizes Theorem \ref{Eyral-1}.

The rest of the paper is entirely devoted to the proof of Theorem \ref{mt}.

\section{Proof of Theorem \ref{mt}}\label{proof-mt}

Let $x_0$ be a base point in $A$. 
As observed (under weaker assumptions) at the beginning of Section \ref{sect-smr}, the bijectivity of $\pi_0(X_{\lambda}, x_0) \rightarrow \pi_0(X, x_0)$ and the surjectivity of $\pi_1(X_{\lambda}, x_0) \rightarrow \pi_1(X, x_0)$ follow from \cite[Th\'eor\`eme 2.5]{E2}. To prove Theorem \ref{mt}, it remains to show that the kernel of $\pi_1(X_{\lambda}, x_0) \rightarrow \pi_1(X, x_0)$ is equal to the normal subgroup of $\pi_1(X_{\lambda}, x_0)$ generated by the union of the images of the relative monodromy variation operators $\mbox{Var}^{\, \mbox{\tiny rel}}_{h_1},\ldots, \mbox{Var}^{\, \mbox{\tiny rel}}_{h_N}$. To prove this assertion, as in \cite{E1}, we first observe that it suffices to consider the special case $\Sigma \subseteq Z$. Indeed, assume that the assertion holds true in this case. Then,  proceeding as in \cite[\S 11]{C4} and \cite[\S 9.2]{E2}, we consider the proper closed algebraic subset $Z':=Z \cup \Sigma$ of $Y$ and we look at the new Whitney stratification $\Xi'$ of $Y$ the strata of which consist of the points of $\Sigma$ together with the traces on $Y\setminus \Sigma$ of the strata of $\Xi$. Clearly, $Z'$ is a union of such new strata. The axis $\Pi_0$ and the generic members of the pencil $\Pi$ transversely meet all the strata of $\Xi'$, while for each $1\leq i\leq N$, the hyperplane $L_i$ is transverse to all these strata except to those consisting of the points of $\Sigma_i$. Thus, if we consider $Z'$ and $\Xi'$ instead of $Z$ and $\Xi$, then we are in the situation of Theorem \ref{mt}, taking the same pencil $\Pi$, the same special hyperplanes $L_i$ ($1\leq i\leq N$), and hence the same sets $\Sigma_i$.
As $\Pi_0$, $L$ and $L_i\setminus \Sigma_i$ do not meet $\Sigma$, the assumptions of Theorem \ref{mt} imply the same assumptions with the set $Z'$ instead of $Z$, and the relative variation operators $\mbox{Var}^{\, \mbox{\tiny rel}}_{h_1},\ldots, \mbox{Var}^{\, \mbox{\tiny rel}}_{h_N}$ remain unchanged. As $\Sigma\subseteq Z'$ and since we have assumed that the assertion is true in this case, it follows that the kernel of the map $\pi_1(X_{\lambda}, x_0) \rightarrow \pi_1(Y\setminus Z', x_0)$ is given by the normal subgroup generated by the images of the relative variation operators $\mbox{Var}^{\, \mbox{\tiny rel}}_{h_i}$ for all $1\leq i\leq N$. 
Now, as $Y \setminus Z' = X \setminus \Sigma$, the general case follows from the  special case using the bijectivity of the map
$\pi_1(X \setminus \Sigma, x_0)\rightarrow \pi_1(X, x_0)$.

We must now prove that the kernel of the map $\pi_1(X_{\lambda}, x_0) \rightarrow \pi_1(X, x_0)$ is actually equal to the normal subgroup of $\pi_1(X_{\lambda}, x_0)$ generated by the images of the relative variation operators $\mbox{Var}^{\, \mbox{\tiny rel}}_{h_1},\ldots, \mbox{Var}^{\, \mbox{\tiny rel}}_{h_N}$ in the special case where $\Sigma \subseteq Z$. This covers the rest of the paper. The proof follows the same pattern as that of the classical Zariski-van Kampen theorem \cite{C2,C3,vK,Z} and its first singular version \cite{E1}. However, as we do not assume here that the map $\pi_0(A)\rightarrow \pi_0(X_\lambda)$ is bijective, it requires essential new arguments which lead us, in particular, to the relative monodromy variation. We shall also often refer to \cite{E2} for important results on the topology of singular spaces used in the proof.

From now on, we assume that $\Sigma \subseteq Z$.

\subsection{Blowing up and fibration outside the special hyperplanes}\label{sect-bu}

As in \cite{AF,C2,C3,C4,C1}, in order to translate crucial isotopies within the generic members of the pencil $\Pi$ in terms of locally trivial fibrations, it is convenient to blow up the ambient space $\mathbb{P}^n$ along the base locus $\Pi_0$ of $\Pi$. By definition, the \emph{blow up} of $\mathbb{P}^n$ along $\Pi_0$ is the $n$-dimensional compact analytic submanifold of $\mathbb{P}^n \times\mathbb{P}^1$ given by
\begin{equation*}
\widetilde{\mathbb{P}^n} := \{(x, \ell) \in \mathbb{P}^n \times\mathbb{P}^1 \mid x \in \Pi(\ell) \},
\end{equation*} 
where $\Pi(\ell) $ is the member of $\Pi$ with parameter $\ell$ (cf.~Section \ref{sect-varop}). The restrictions to $\widetilde{\mathbb{P}^n}$ of the projections of $\mathbb{P}^n \times\mathbb{P}^1$ give proper analytic morphisms
\begin{equation*}
f\colon\widetilde{\mathbb{P}^n}\rightarrow \mathbb{P}^n
\quad\mbox{and}\quad
p\colon\widetilde{\mathbb{P}^n}\rightarrow \mathbb{P}^1
\end{equation*} 
which are called the \emph{blowing up} morphism and the \emph{projection} morphism respectively.
\begin{notation}
For any subsets $E \subseteq \mathbb{P}^n$ and $\Lambda \subseteq \mathbb{P}^1$, we set
\begin{equation*}
\widetilde{E}:= f^{-1}(E)
\quad\mbox{and}\quad
\widetilde{E}_\Lambda := \widetilde{E} \cap p^{-1}(\Lambda).
\end{equation*} 
One must not confuse $\widetilde{E}_\Lambda$ with $\widetilde{E_\Lambda} = \widetilde{E}_\Lambda \cup \widetilde{(E \cap \Pi_0)}= \widetilde{E}_\Lambda \cup ((E \cap \Pi_0) \times \mathbb{P}^1)$. For instance, $\widetilde{X_\lambda}=\widetilde X_\lambda\cup (A\times\mathbb{P}^1)$, where, as above, we write $\widetilde X_\lambda$ instead of $\widetilde X_{\{\lambda\}}$.
\end{notation}

Taking a suitable stratification of $\widetilde{Y}$ and applying the first isotopy theorem of Thom-Mather \cite{M,T} shows that the restriction $p^*$ of $p$ to 
\begin{equation*}
\widetilde{X}^* := \widetilde{X} \bigg\backslash \bigg(\bigcup_{1\leq i\leq N}\widetilde{X}_{\lambda_i}\bigg)
\end{equation*} 
 is a locally trivial fibration over $\mathbb{P}^{1*}$ with fibre $\widetilde{X}_{\lambda}$ homeomorphic to $X_{\lambda}$. Moreover, this topological bundle has $A \times \mathbb{P}^{1*}$ as a trivial subbundle of it. For details we refer the reader to \cite[(11.1.5)]{C4}.

Clearly, the blowing up morphism $f$ induces an isomorphism
\begin{equation*}
\pi_1(\widetilde{X}_{\lambda}, (x_0, \lambda)) 
\overset{\sim}{\rightarrow} \pi_1(X_{\lambda}, x_0).
\end{equation*} 
As $(X_{\lambda}, A)$ is 0-connected and $\Sigma \subseteq Z$, it also induces an isomorphism
\begin{equation}\label{bumih}
\pi_1(\widetilde{X}, (x_0, \lambda)) 
\overset{\sim}{\rightarrow} \pi_1(X, x_0).
\end{equation} 
This second assertion is far from being obvious. It is proved in \cite[\S 8]{E2}. Roughly, the idea of the proof is as follows. The blowing up morphism $f$ induces an isomorphism $\widetilde{X} \setminus \widetilde{A} \overset{\sim}{\rightarrow} X \setminus A$. Then by applying the homotopy excision theorem of Blakers-Massey (see e.g.~\cite[Corollary 16.27]{G}) to a suitable excision in the mapping cylinder of the blowing up morphism, we can show that the map 
\begin{equation*}
\pi_q(\widetilde{X}, \widetilde{A}, (x_0, \lambda)) \rightarrow \pi_q(X, A, x_0)
\end{equation*}  
(induced by $f$) is bijective for $q=1$ and surjective for $q=2$. Then the bijectivity of (\ref{bumih}) can be obtained using properties of the projection morphism $p$. For a complete and detailed proof, we refer the reader to \cite[\S 8]{E2}.

In order to use the geometric setting described above, we include the natural map $\pi_1(X_{\lambda}, x_0) \rightarrow \pi_1(X, x_0)$ into the following commutative diagram, where the horizontal arrows are induced by inclusions and the vertical ones are induced by the blowing up morphism:
\begin{equation}\label{commdiag-1}
\begin{gathered}
\xymatrix{
\pi_1(\widetilde{X}_\lambda,(x_0, \lambda)) \ar[d]^{\wr}_{f_\#}
\ar@{}[dr]|{\circlearrowleft} \ar[r]^{\widetilde{\mbox{\tiny incl}}_\#}
& \pi_1(\widetilde{X},(x_0, \lambda)) \ar[d]^{\wr}  \\
\pi_1(X_\lambda, x_0)  \ar[r]^{\mbox{\tiny incl}_\#} & \pi_1(X, x_0)
}
\end{gathered}
\end{equation}
(Note that, since $\mbox{incl}_\#$ is surjective, so is $\widetilde{\mbox{incl}}_\#$.) Clearly, 
\begin{equation*}
\ker(\mbox{incl}_\#)=f_{\#} (\ker(\widetilde{\mbox{incl}}_\#)). 
\end{equation*}
Thus in order to prove the theorem it suffices to compute $\ker(\widetilde{\mbox{incl}}_\#)$. For that purpose, it is convenient to write $\widetilde{\mbox{incl}}_\#$ as the composite of the following maps (both induced by inclusions):
\begin{equation*}
\pi_1(\widetilde{X}_\lambda,(x_0, \lambda)) \rightarrow \pi_1(\widetilde{X}^*,(x_0, \lambda)) \rightarrow \pi_1(\widetilde{X},(x_0, \lambda)).
\end{equation*}
The plan for the rest of the proof is as follows. In \S \ref{fgfb}, we study the relationship between the groups $\pi_1(\widetilde{X}_\lambda,(x_0, \lambda))$ and $\pi_1(\widetilde{X}^*,(x_0, \lambda))$, and in \S \ref{fgtbus} we compare the groups $\pi_1(\widetilde{X}^*,(x_0, \lambda))$ and $\pi_1(\widetilde{X},(x_0, \lambda))$. To understand the relations between these groups, we introduce in \S \ref{rmvopbus} the relative monodromy variation operator on the blown up space.

\subsection{Relative monodromy variation operator on the blown up space}\label{rmvopbus}

By \cite[Lemma 4.2]{C1}, for each $1\leq i\leq N$, there is an isotopy
\begin{equation}\label{isotopyHit}
\widetilde H_i\colon \widetilde X_\lambda \times I
\to \widetilde X_{\partial K_i},\quad (x,\tau)\mapsto \widetilde H_i(x,\tau),
\end{equation}
satisfying the following properties:
\begin{enumerate}
\item
$\widetilde H_i((x,\lambda),0)=(x,\lambda)$ for any $(x,\lambda)\in\widetilde X_\lambda$;
\item 
$\widetilde H_i((x,\lambda),\tau)\in \widetilde X_{\omega_i(\tau)}$ for any $(x,\lambda)\in\widetilde X_\lambda$ and any $\tau\in I$;
\item
for each $\tau\in I$, the map $\widetilde X_\lambda \to \widetilde X_{\omega_i(\tau)}$, defined by $(x,\lambda)\mapsto \widetilde H_i((x,\lambda),\tau)$, is a homeomorphism;
\item 
$\widetilde H_i((x,\lambda),\tau)=(x,\omega_i(\tau))$ for any $(x,\lambda)\in \widetilde A_\lambda=A\times \{\lambda\}$ and any $\tau\in I$.
\end{enumerate}

\begin{remark}[\mbox{cf.~\cite[Lemma 4.2]{C1}}]\label{rk-relHiHit}
Any such isotopy $\widetilde{H}_i$ induces an isotopy $H_i$ as in (\ref{isotopyHi}) if we put $H_i(x,\tau):=f(\widetilde{H}_i((x,\lambda),\tau))$ for any $(x,\tau) \in X_\lambda \times I$. Conversely, any isotopy $H_i$ given by (\ref{isotopyHi}) can be obtained using the above formula from a unique isotopy $\widetilde{H}_i$ (as in (\ref{isotopyHit})) defined by $\widetilde{H}_i((x,\lambda),\tau):=(H_i(f(x,\lambda),\tau), \omega_i(\tau))=(H_i(x,\tau),\omega_i(\tau))$ for any $((x,\lambda),\tau) \in \widetilde{X}_\lambda \times I$.
\end{remark}

Clearly, the terminal homeomorphism $\widetilde h_i\colon \widetilde X_\lambda\to \widetilde X_\lambda$ of the above isotopy, which is defined by
\begin{equation*}
(x,\lambda)\mapsto \widetilde h_i(x,\lambda):=\widetilde H_i((x,\lambda),1),
\end{equation*}
leaves $\widetilde A_\lambda=A\times \{\lambda\}$ pointwise fixed.

\begin{definition}\label{def-mbus}
The map $\widetilde{h}_i$ is called a monodromy of $\widetilde{X}_\lambda$ relative to $\widetilde{A}_\lambda$ above $\omega_i$. 
(A similar observation to that made in Remark \ref{dohcoi} applies for $\widetilde{h}_i$ as well.)
\end{definition}

Now pick a relative loop $\alpha\in F^1(\widetilde{X}_\lambda, \widetilde A_\lambda, (x_0,\lambda))$, and consider the map $\alpha^{-1}\cdot \widetilde{h}_i\circ\alpha$ defined on~$I$ by
\begin{align*}
\alpha^{-1}\cdot \widetilde{h}_i\circ\alpha(t):= 
\left\{
\begin{aligned}
& \alpha^{-1}(2t)&\mbox{ for }\quad  0\leq t\leq 1/2,\\
& \widetilde{h}_i\circ\alpha(2t-1) &\mbox{ for } \quad 1/2\leq t\leq 1.
\end{aligned}
\right.
\end{align*}
As $\widetilde{h}_i$ is the identity on $\widetilde{A}_\lambda$, this map is well defined and it belongs to $F^1(\widetilde{X}_\lambda, (x_0, \lambda))$, that is, $\alpha^{-1}\cdot \widetilde{h}_i\circ\alpha$ is an absolute loop.
Then a similar argument to that given in \cite[Lemma 4.1]{CE} shows that the correspondence
\begin{align*}
\widetilde{\mbox{Var}}^{\mbox{\tiny rel}}_{\, \widetilde{h}_i}\colon \pi_1(\widetilde{X}_\lambda,\widetilde{A}_\lambda,(x_0, \lambda)) & \to \pi_1(\widetilde{X}_\lambda,(x_0, \lambda))\\ 
[\alpha]_{\widetilde{X}_\lambda,\widetilde{A}_\lambda,(x_0, \lambda)} & \mapsto 
[\alpha^{-1}\cdot \widetilde{h}_i\circ\alpha]_{\widetilde{X}_\lambda,(x_0, \lambda)}
\end{align*}
is well defined and only depends on the homotopy class $[\omega_i]\in \pi_1(\mathbb{P}^{1*},\lambda)$. 

Operators $\mbox{Var}^{\, \mbox{\tiny rel}}_{h_i}$ and $\widetilde{\mbox{Var}}^{\mbox{\tiny rel}}_{\, \widetilde{h}_i}$  are related to each other through (the homomorphism induced in homotopy by) the blowing up morphism. This is stated in the next lemma.

\begin{lemma}\label{commdiag-2}
The following diagram, in which the vertical maps are induced by the blowing up morphism $f$, commutes:
\begin{equation*}
\xymatrix{
\pi_1(\widetilde{X}_\lambda, \widetilde{A}_\lambda, (x_0, \lambda)) \ar[d]^{\wr}
\ar@{}[dr]|{\circlearrowleft} \ar[r]^{\widetilde{\emph{\mbox{Var}}}^{\emph{\mbox{\tiny rel}}}_{\, \widetilde{h}_i} } 
& \pi_1(\widetilde{X}_\lambda, (x_0, \lambda)) \ar[d]^{\wr} \\
\pi_1(X_\lambda, A, x_0)  \ar[r]^{\emph{\mbox{Var}}^{\, \emph{\mbox{\tiny rel}}}_{h_i}}& \pi_1(X_\lambda, x_0)
}
\end{equation*}
\end{lemma}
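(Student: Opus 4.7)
The plan is to verify the commutativity by direct pointwise computation, the whole argument being driven by the compatibility between $h_i$ and $\widetilde h_i$ recorded in Remark \ref{rk-relHiHit}.

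First, I would exploit the freedom of choice in the variation operators. By Remark \ref{dohcoi} and the analogous observation for $\widetilde h_i$ mentioned in Definition \ref{def-mbus}, the maps $\mbox{Var}^{\,\mbox{\tiny rel}}_{h_i}$ and $\widetilde{\mbox{Var}}^{\mbox{\tiny rel}}_{\,\widetilde h_i}$ depend only on $[\omega_i]\in\pi_1(\mathbb{P}^{1*},\lambda)$, so it is legitimate to fix compatible representatives: pick any isotopy $\widetilde H_i$ as in (\ref{isotopyHit}) and then take $H_i$ to be the isotopy defined by $H_i(x,\tau):=f(\widetilde H_i((x,\lambda),\tau))$, exactly as in Remark \ref{rk-relHiHit}. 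Taking $\tau=1$ yields the key identity
\begin{equation*}
h_i\circ f\big|_{\widetilde X_\lambda}=f\circ\widetilde h_i,
\end{equation*}
since for $(x,\lambda)\in\widetilde X_\lambda$ one has $f(\widetilde h_i(x,\lambda))=H_i(x,1)=h_i(x)=h_i(f(x,\lambda))$.

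Second, I would spell out the two sides of the square on a representative relative loop $\alpha\in F^1(\widetilde X_\lambda,\widetilde A_\lambda,(x_0,\lambda))$. Observe that the left-hand vertical map is well defined: $f$ sends $\widetilde A_\lambda=A\times\{\lambda\}$ to $A$ and $(x_0,\lambda)$ to $x_0$, so $f\circ\alpha\in F^1(X_\lambda,A,x_0)$. Going right-then-down produces the element $[f\circ(\alpha^{-1}\cdot\widetilde h_i\circ\alpha)]_{X_\lambda,x_0}$, while going down-then-right produces $[(f\circ\alpha)^{-1}\cdot h_i\circ(f\circ\alpha)]_{X_\lambda,x_0}$. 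It therefore suffices to show that the two loops
\begin{equation*}
f\circ(\alpha^{-1}\cdot\widetilde h_i\circ\alpha)
\quad\text{and}\quad
(f\circ\alpha)^{-1}\cdot h_i\circ(f\circ\alpha)
\end{equation*}
are equal in $F^1(X_\lambda,x_0)$.

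Third, I would check equality pointwise by splitting on $[0,1/2]$ and $[1/2,1]$. On the first interval both loops send $t$ to $(f\circ\alpha)(1-2t)$, because $f$ commutes with the definition of $\alpha^{-1}$. On the second interval the first loop evaluates at $t$ to $f(\widetilde h_i(\alpha(2t-1)))$ and the second to $h_i((f\circ\alpha)(2t-1))=h_i(f(\alpha(2t-1)))$; the identity $f\circ\widetilde h_i=h_i\circ f$ established above shows these coincide. Passing to homotopy classes yields the desired commutativity.

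I do not anticipate any real obstacle here. The substance is entirely contained in Remark \ref{rk-relHiHit} together with the freedom to make compatible choices of monodromies; once those are in place the lemma reduces to a formal verification that $f$ commutes pointwise with concatenation, path reversal and with the monodromy action, which is immediate.
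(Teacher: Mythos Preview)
Your argument is correct and is exactly the approach the paper takes: the paper simply states that the lemma ``immediately follows from Remarks \ref{dohcoi} and \ref{rk-relHiHit}'', and you have merely unpacked that remark into the explicit identity $f\circ\widetilde h_i=h_i\circ f|_{\widetilde X_\lambda}$ and the routine pointwise verification.
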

This lemma immediately follows from Remarks \ref{dohcoi} and \ref{rk-relHiHit}.

\subsection{The fundamental group $\pi_1(\widetilde X^*,(x_0,\lambda))$}\label{fgfb}

The main result of this section is Proposition \ref{fgxts}. This proposition is already proved in \cite[Lemma 7.3.3]{E1}. It is a singular version of \cite[Proposition (4.1.1)]{C2} (and \cite[Lemme (2.4)]{C3}). For convenience of the reader, we briefly recall the idea.

The exact homotopy sequence of the locally trivial fibration 
\begin{equation*}
p^* \colon \widetilde{X}^* \rightarrow \mathbb{P}^{1*}
\end{equation*} 
(induced by the projection morphism $p$) is written as follows:
\begin{equation*}
\cdots \rightarrow \underbrace{\pi_2(\mathbb{P}^{1*},\lambda)}_{\mbox{\tiny trivial group $\{e\}$}} \rightarrow \pi_1(\widetilde{X}_\lambda,(x_0, \lambda)) \overset{i_\#}{\rightarrow} \pi_1(\widetilde{X}^*,(x_0, \lambda)) \overset{p^*_\#}{\rightarrow} \pi_1(\mathbb{P}^{1*}, \lambda) \rightarrow\cdots,
\end{equation*} 	
where $i_\#$ is induced by inclusion.
As $p$ induces an isomorphism
\begin{equation}\label{isobase}
\pi_1(\{x_0\} \times \mathbb{P}^{1*}, (x_0, \lambda)) \overset{\sim}{\rightarrow} \pi_1(\mathbb{P}^{1*}, \lambda), 
\end{equation} 
we have the short exact sequence
\begin{equation}\label{exact1}
\{e\} \rightarrow \pi_1(\widetilde{X}_\lambda, (x_0, \lambda)) \overset{i_{\#}}{\longrightarrow} \pi_1(\widetilde{X}^*, (x_0, \lambda)) \overset{\rho}{\longrightarrow} \pi_1(\{x_0\} \times \mathbb{P}^{1*}, (x_0, \lambda)),
\end{equation}
where $\rho$ is the composite of $p^*_{\#}$ with the inverse of isomorphism (\ref{isobase}).

As $i_{\#}$ is injective, we can identify $\pi_1(\widetilde{X}_\lambda, (x_0, \lambda))$ with its image by $i_{\#}$. In other words, for any loop $\alpha\in F^1(\widetilde{X}_\lambda,(x_0, \lambda))$, we identify the homotopy classes
\begin{equation}\label{identhc1}
[\alpha]_{\widetilde{X}_\lambda, (x_0, \lambda)} 
\quad\mbox{and}\quad
[\alpha]_{\widetilde{X}^*, (x_0, \lambda)}.
\end{equation}
With such an identification, $\pi_1(\widetilde{X}_\lambda, (x_0, \lambda))$ can be viewed as a normal subgroup of $\pi_1(\widetilde{X}^*, (x_0, \lambda))$. 

Now consider the natural map 
\begin{equation*}
j_{\#} \colon \pi_1(\{x_0\} \times \mathbb{P}^{1*}, (x_0, \lambda))\rightarrow \pi_1(\widetilde{X}^*, (x_0, \lambda)).
\end{equation*}
We easily show that $j_{\#}$ is a \emph{section} of $\rho$, that is, the composite $\rho \circ j_{\#}$ is the identity. Moreover, as $j_{\#}$ is injective, we can identify
$\pi_1(\{x_0\} \times \mathbb{P}^{1*}, (x_0, \lambda))$ with its image by $j_{\#}$.
That is, for any loop $\alpha\in F^1(\{x_0\} \times \mathbb{P}^{1*}, (x_0, \lambda))$, we identify the homotopy classes
\begin{equation}\label{identhc2}
[\alpha]_{\{x_0\} \times \mathbb{P}^{1*}, (x_0, \lambda)} 
\quad\mbox{and}\quad
[\alpha]_{\widetilde{X}^*, (x_0, \lambda)}.
\end{equation}
Combined with the exactness of (\ref{exact1}), the existence of such a section $j_{\#}$ of $\rho$ implies that $\pi_1(\widetilde{X}^*, (x_0, \lambda))$ is the \emph{internal semi-direct product} of its subgroups $\pi_1(\widetilde{X}_\lambda, (x_0, \lambda))$ and $\pi_1(\{x_0\} \times \mathbb{P}^{1*}, (x_0, \lambda))$.
Then, by \cite[Proposition 10.1 and Corollary 10.1]{J}, we obtain the following presentation for the fundamental group $\pi_1(\widetilde{X}^*, (x_0, \lambda))$.

\begin{proposition}[\mbox{cf.~\cite[Lemma 7.3.3]{E1}}]\label{fgxts}
Fix a presentation of $\pi_1(\widetilde{X}_\lambda, (x_0, \lambda))$ as in \cite[Proposition 4.1]{J}. 
Then the fundamental group $\pi_1(\widetilde{X}^*, (x_0, \lambda))$ is presented by the generators of $\pi_1(\widetilde{X}_\lambda, (x_0, \lambda))$, the generators 
\begin{equation*}
[(x_0, \omega_1)], \dots, [(x_0, \omega_N)]
\end{equation*}
of $\pi_1(\{x_0\} \times \mathbb{P}^{1*}, (x_0, \lambda))$, 
and by the relations of $\pi_1(\widetilde{X}_\lambda, (x_0, \lambda))$ together with the following additional relations:
\begin{enumerate}
\item[(i)]
$[(x_0, \omega_1)] \cdots [(x_0, \omega_N)] = e$;
\item[(ii)]
$[\alpha]\cdot[(x_0, \omega_i)] = [(x_0, \omega_i)]\cdot\widetilde{h}_{i\#}([\alpha])$ for any $1 \leq i \leq N$ and any $[\alpha] \in \pi_1(\widetilde{X}_\lambda, (x_0, \lambda))$;
\end{enumerate}
where $\widetilde{h}_{i\#}\colon \pi_1(\widetilde{X}_\lambda, (x_0, \lambda))\overset{\sim}{\rightarrow} \pi_1(\widetilde{X}_\lambda, (x_0, \lambda))$ is the automorphism induced by the monodromy $\widetilde{h}_{i}$ (cf.~Definition \ref{def-mbus}) and $N$ is the number of special hyperplanes.
\end{proposition}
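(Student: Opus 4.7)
The plan is to extract the presentation from the semi-direct product decomposition already in hand: the short exact sequence (\ref{exact1}) together with the section $j_\#$ realizes $\pi_1(\widetilde{X}^*,(x_0,\lambda))$ as the internal semi-direct product of $\pi_1(\widetilde{X}_\lambda,(x_0,\lambda))$ (normal) by $\pi_1(\{x_0\}\times\mathbb{P}^{1*},(x_0,\lambda))$. Three pieces of work remain: identify a presentation of the quotient factor, invoke the standard presentation theorem for semi-direct products, and verify that the conjugation action takes the claimed form.

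The quotient factor is easy. The space $\mathbb{P}^{1*}=\mathbb{P}^1\setminus\{\lambda_1,\ldots,\lambda_N\}$ is the $N$-punctured sphere, and the loops $\omega_i=\rho_i\delta_i\rho_i^{-1}$ are arranged by construction as a standard bouquet based at $\lambda$, one loop going once around each $\lambda_i$ in the prescribed cyclic order. Hence $\pi_1(\mathbb{P}^{1*},\lambda)$ is generated by $[\omega_1],\ldots,[\omega_N]$ with the single defining relation $[\omega_1]\cdots[\omega_N]=e$, the loop encircling all $N$ punctures being null-homotopic on the complementary disc of $S^2$. Since $p$ restricts to a homeomorphism $\{x_0\}\times\mathbb{P}^{1*}\to\mathbb{P}^{1*}$, this transports to the generators $[(x_0,\omega_i)]$ and relation~(i). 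Applying \cite[Proposition 10.1 and Corollary 10.1]{J} then produces a presentation of the semi-direct product by concatenating the two factor presentations and adjoining, for each pair (fibre generator, quotient generator), a conjugation relation $q\cdot n\cdot q^{-1}=\phi_q(n)$, where $\phi_q$ is the action of $q$ on the normal subgroup.

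The main obstacle is to identify this action $\phi_{[(x_0,\omega_i)]}$ with $\widetilde{h}_{i\#}$. This is the familiar identification, for a locally trivial fibration admitting a section, of monodromy with conjugation in the semi-direct product; one must check it with the specific monodromy $\widetilde{h}_i$ furnished by the isotopy $\widetilde{H}_i$ of (\ref{isotopyHit}). Given a loop $\alpha$ based at $(x_0,\lambda)$ in $\widetilde{X}_\lambda$, applying $\widetilde{H}_i$ pointwise to $\alpha$ provides a homotopy in $\widetilde{X}^*$ from $\alpha$ to $\widetilde{h}_i\circ\alpha$ during which the base point $(x_0,\lambda)$ traces out the loop $(x_0,\omega_i)$ (this uses property~(4) of $\widetilde{H}_i$, whereby $\widetilde{H}_i$ acts as $(x,\omega_i(\tau))$ on $\widetilde{A}_\lambda$). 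A standard path-composition argument then yields
\[
[(x_0,\omega_i)]^{-1}\cdot[\alpha]\cdot[(x_0,\omega_i)]=[\widetilde{h}_i\circ\alpha]=\widetilde{h}_{i\#}([\alpha])
\]
in $\pi_1(\widetilde{X}^*,(x_0,\lambda))$, which is precisely relation~(ii). Crucially, this step relies on the trivial sub-bundle $A\times\mathbb{P}^{1*}$ secured by the blow-up, without which the lift $(x_0,\omega_i)$ of $\omega_i$ to the total space would not be available.
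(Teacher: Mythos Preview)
Your proposal is correct and follows essentially the same approach as the paper: both arguments deduce the presentation from the internal semi-direct product structure coming from the split short exact sequence~(\ref{exact1}) and then invoke \cite[Proposition~10.1 and Corollary~10.1]{J}. The paper's proof is in fact only a sketch that stops at the citation of~\cite{J}, whereas you additionally spell out the presentation of $\pi_1(\mathbb{P}^{1*},\lambda)$ and the identification of the conjugation action with $\widetilde{h}_{i\#}$ via the isotopy $\widetilde{H}_i$; this extra detail is sound and is implicit in the paper's reference to \cite[Lemma~7.3.3]{E1}.
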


Here, $(x_0, \omega_i)$ denotes the loop $I \rightarrow \{x_0\} \times \mathbb{P}^{1*}$ defined by $t\mapsto (x_0, \omega_i)(t) := (x_0, \omega_i(t))$.

\subsection{The fundamental group $\pi_1(\widetilde X,(x_0,\lambda))$}\label{fgtbus}
In this section, we prove the following proposition, which extends Proposition (4.2.1) of \cite{C2} and Lemma 7.4.1 of \cite{E1}. This proposition is the main point in the proof of Theorem \ref{mt}.

\begin{proposition}\label{fgxt}
Choose a presentation of $\pi_1(\widetilde X_\lambda,(x_0,\lambda))$ as in Proposition \ref{fgxts}. Then the fundamental group $\pi_1(\widetilde X,(x_0,\lambda))$ is presented by the generators and the relations of $\pi_1(\widetilde X_\lambda,(x_0,\lambda))$ together with the additional relations
\begin{equation*}
\widetilde{\mbox{\emph{Var}}}^{\mbox{\tiny \emph{rel}}}_{\, \widetilde h_i}([\alpha])=e
\end{equation*}
for any $1\leq i\leq N$ and any $[\alpha]\in \pi_1(\widetilde X_\lambda,\widetilde A_\lambda,(x_0,\lambda))$.
In other words, the kernel of the natural epimorphism 
\begin{equation*}
\pi_1(\widetilde X_\lambda,(x_0,\lambda)) \to \pi_1(\widetilde X,(x_0,\lambda))
\end{equation*} 
coincides with the normal subgroup of $\pi_1(\widetilde X_\lambda,(x_0,\lambda))$ generated by the images of operators $\widetilde{\mbox{\emph{Var}}}^{\mbox{\tiny \emph{rel}}}_{\, \widetilde h_1},\ldots,\widetilde{\mbox{\emph{Var}}}^{\mbox{\tiny \emph{rel}}}_{\, \widetilde h_N}$ defined in Section \ref{rmvopbus}.
\end{proposition}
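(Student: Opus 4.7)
The plan is to read off a presentation of $\pi_1(\widetilde X,(x_0,\lambda))$ from the presentation of $\pi_1(\widetilde X^{*},(x_0,\lambda))$ supplied by Proposition \ref{fgxts}, by identifying the additional relations imposed when passing from $\widetilde X^{*}$ to $\widetilde X$ and then rewriting them in relative-variation form. Geometrically, the inclusion $\widetilde X^{*}\hookrightarrow\widetilde X$ fills in the special fibres, and the new $\pi_1$-relations come from $2$-disks of $\widetilde X$ unavailable in $\widetilde X^{*}$ — chiefly the disks $\{y\}\times D_i\subset\widetilde A\subset\widetilde X$ for $y\in A$, which bound the base-fibre loops $(y,\omega_i)$. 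The surjectivity of $\pi_1(\widetilde X^{*},(x_0,\lambda))\to\pi_1(\widetilde X,(x_0,\lambda))$ is already known; one has to compute its kernel and push it down through the injection $\pi_1(\widetilde X_\lambda,(x_0,\lambda))\hookrightarrow\pi_1(\widetilde X^{*},(x_0,\lambda))$.

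First, following the pattern of \cite[Proposition~(4.2.1)]{C2} and \cite[Lemma~7.4.1]{E1}, and adapting it to the singular setting via the blow-up excision of \cite[\S 8]{E2}, I would perform a van~Kampen/Thom--Mather analysis of a tubular neighbourhood of each $\widetilde X_{\lambda_i}$ in $\widetilde X$ to show that $\ker\bigl(\pi_1(\widetilde X^{*},(x_0,\lambda))\to\pi_1(\widetilde X,(x_0,\lambda))\bigr)$ is normally generated by the classes $[\beta\cdot(y,\omega_i)\cdot\beta^{-1}]$, where $1\le i\le N$, $y$ ranges over a set of representatives of the path components of $A$ meeting the component of $\widetilde X^{*}$ containing $(x_0,\lambda)$, and $\beta$ is a path in $\widetilde X^{*}$ from $(x_0,\lambda)$ to $(y,\lambda)$. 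The reduction $\Sigma\subseteq Z$ and the $0$-connectedness of $(X_{\lambda_i}\setminus\Sigma_i,A)$ are essential here: they guarantee that these disks account for all the new $2$-cells, and that no further hidden relations arise from the singular structure of the special fibres.

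The main new ingredient is a \emph{fundamental identity} that re-expresses the generators above in terms of the relative variations. For a relative loop $\alpha\in F^1(\widetilde X_\lambda,\widetilde A_\lambda,(x_0,\lambda))$ with $\alpha(0)=(y_0,\lambda)$, consider the square
\begin{equation*}
\Phi_i\colon I\times I\to\widetilde X_{\partial K_i},\qquad (t,s)\mapsto\widetilde H_i(\alpha(t),s).
\end{equation*}
Reading its boundary off from properties~(1)--(4) of $\widetilde H_i$, one obtains the null-homotopic loop $\alpha\cdot(x_0,\omega_i)\cdot(\widetilde h_i\circ\alpha)^{-1}\cdot(y_0,\omega_i)^{-1}$; rearranging and telescoping via $\alpha\cdot\alpha^{-1}\simeq\mbox{const}$ yields, in $\pi_1(\widetilde X^{*},(x_0,\lambda))$,
\begin{equation*}
[(x_0,\omega_i)]\;=\;[\alpha^{-1}\cdot(y_0,\omega_i)\cdot\alpha]\cdot\widetilde{\mbox{Var}}^{\mbox{\tiny rel}}_{\,\widetilde h_i}([\alpha]).
\end{equation*}
This identity does double duty: modulo the normal closure of $\{[(x_0,\omega_i)]\}_i$ together with the images of all $\widetilde{\mbox{Var}}^{\mbox{\tiny rel}}_{\widetilde h_i}$, every $[\alpha^{-1}\cdot(y_0,\omega_i)\cdot\alpha]$ becomes trivial; and modulo the kernel identified in the previous step, every relative variation becomes trivial (as $[(x_0,\omega_i)]$ is among those generators). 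Hence the two normal subgroups of $\pi_1(\widetilde X^{*},(x_0,\lambda))$ coincide.

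Quotienting the presentation of Proposition \ref{fgxts} by this common normal subgroup produces $\pi_1(\widetilde X,(x_0,\lambda))$. Since $[(x_0,\omega_i)]$ dies in the quotient, relation~(ii) of Proposition \ref{fgxts} collapses to $\widetilde h_{i\#}([\alpha])=[\alpha]$, i.e., the absolute variation $\widetilde{\mbox{Var}}_{\widetilde h_i}([\alpha])=e$, which is a particular case of the relative variation (for $\alpha$ absolute); and the now-redundant generators $[(x_0,\omega_i)]$ can be eliminated. The result is precisely the presentation claimed in Proposition \ref{fgxt}. The main obstacle is the van-Kampen step of the second paragraph: rigorously showing that $\ker(\pi_1(\widetilde X^{*})\to\pi_1(\widetilde X))$ is normally generated by exactly the $[(y,\omega_i)]$-conjugates requires a careful study of the local topology of $\widetilde X_{D_i}$ around each special fibre, and it is here that the reduction $\Sigma\subseteq Z$, the connectedness hypotheses of Theorem \ref{mt}, and the Thom--Mather machinery must all be brought to bear.
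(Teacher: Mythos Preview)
Your overall strategy matches the paper's: compute $\ker\bigl(\pi_1(\widetilde X^{*})\to\pi_1(\widetilde X)\bigr)$ as a normal closure of conjugates of the base loops $(y,\omega_i)$, then use the isotopy $\widetilde H_i$ to rewrite these conjugates in terms of $[(x_0,\omega_i)]$ and the relative variations. Your ``fundamental identity'' is exactly the paper's Claim~\ref{sccp} (rearranged), and your final reduction of the presentation of Proposition~\ref{fgxts} is the same as the paper's.

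There is, however, one genuine gap between your two steps. In your kernel description you allow the conjugating path $\beta$ to lie in $\widetilde X^{*}$, whereas your fundamental identity requires the conjugating path $\alpha^{-1}$ to lie in the \emph{fibre} $\widetilde X_\lambda$ (the square $\Phi_i$ is only defined for $\alpha$ with image in $\widetilde X_\lambda$). To pass from one to the other you must know that whenever $(y,\lambda)$ lies in the path component of $(x_0,\lambda)$ in $\widetilde X^{*}$ (equivalently in $\widetilde X$), it already lies in the same path component of $\widetilde X_\lambda$; only then can $[\beta\cdot(y,\omega_i)\cdot\beta^{-1}]$ be rewritten as a $\pi_1(\widetilde X^{*})$-conjugate of some $[\alpha^{-1}\cdot(y,\omega_i)\cdot\alpha]$ with $\alpha$ in the fibre. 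This is not automatic: it is the bijectivity of $\pi_0(X_\lambda)\to\pi_0(X)$, which the paper obtains from \cite[Th\'eor\`eme~2.5]{E2} and invokes explicitly at precisely this point (Claim~\ref{existence-rel-loop}). Your sketch mentions the connectedness hypotheses only in connection with the local tubular analysis, not with this global $\pi_0$ comparison; without it, the inclusion of the kernel into your normal subgroup $N$ is unjustified.

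The paper in fact organizes things so that this step is absorbed into Lemma~\ref{lemma-fget}: it proves directly that the kernel equals the normal closure of the $[\alpha^{-1}\cdot(f\circ\alpha(0),\omega_i)\cdot\alpha]$ with $\alpha$ already in $F^1(\widetilde X_\lambda,\widetilde A_\lambda,(x_0,\lambda))$, and the passage to the fibre (via \cite[Th\'eor\`eme~2.5]{E2}) occurs inside that proof. Once you insert this $\pi_0$ argument, your outline is complete and equivalent to the paper's.
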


We divide the proof of this proposition into two key observations.
For any $1\leq i\leq N$ and any \emph{relative} loop $\alpha\in F^1(\widetilde X_\lambda,\widetilde A_\lambda,(x_0,\lambda))$, the composition of~paths
\begin{equation}\label{compfond-2}
\alpha^{-1} \cdot (f\circ\alpha(0),\omega_i) \cdot \alpha 
\end{equation}
defines an element of $F^1(\widetilde X^*,(x_0,\lambda))$ which is null-homotopic in $(\widetilde X,(x_0,\lambda))$. (We recall that $f$ is the blowing up morphism and that $(f\circ\alpha(0),\omega_i)$ denotes the loop $t\in I\mapsto (f\circ\alpha(0),\omega_i(t))\in \widetilde X^*$.) In other words, the normal subgroup $G$ of $\pi_1(\widetilde X^*,(x_0,\lambda))$ generated by the (homotopy classes of) loops of the form (\ref{compfond-2}) is contained in the kernel of the natural~map 
\begin{equation}\label{ixtsdxt}
\pi_1(\widetilde X^*,(x_0,\lambda)) \rightarrow\pi_1(\widetilde X,(x_0,\lambda)).
\end{equation}
The first crucial observation says that $G$ is actually equal to the kernel of this map.

\begin{lemma}\label{lemma-fget}
The normal subgroup $G$ coincides with the kernel of the map (\ref{ixtsdxt}).
\end{lemma}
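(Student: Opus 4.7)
The inclusion $G\subseteq\ker$ was noted just before the statement; the content of the lemma is the reverse inclusion. Given $[\gamma]\in\ker(\widetilde{\mbox{incl}}_\#)$ realised by a loop $\gamma\colon I\to\widetilde X^*$ based at $(x_0,\lambda)$, fix a continuous null-homotopy $H\colon I^2\to\widetilde X$ of $\gamma$ to the constant loop $(x_0,\lambda)$. The plan is a generic-position argument on $\sigma:=p\circ H\colon I^2\to\mathbb{P}^1$ followed by a local analysis at each crossing of a special fibre, in which the relative loops enter naturally.

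By a standard approximation (cf.~\cite[\S 4.2]{C2} and \cite[\S 7.4]{E1}), I would first replace $H$ by a homotopic null-homotopy rel~$\partial I^2$ so that $\sigma^{-1}(\{\lambda_1,\dots,\lambda_N\})$ is a finite set of interior points $\{(s_k,t_k)\}_{k=1}^{r}$, with $\sigma(s_k,t_k)=\lambda_{i(k)}$. Choose pairwise-disjoint small closed topological discs $\Delta_k\subseteq\mathrm{int}(I^2)$ around the $(s_k,t_k)$ satisfying $\sigma(\Delta_k)\subseteq\mathrm{int}(D_{i(k)})$, and pairwise-disjoint simple paths $\tau_k\subseteq I^2\setminus\bigcup_j\mathrm{int}(\Delta_j)$ (meeting only at $(0,0)$) from $(0,0)$ to points $p_k\in\partial\Delta_k$. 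Since $H(I^2\setminus\bigcup_k\mathrm{int}(\Delta_k))\subseteq\widetilde X^*$, a standard Van~Kampen-style decomposition yields
\begin{equation*}
[\gamma]=\prod_{k=1}^{r}\bigl[(H\circ\tau_k)^{-1}\cdot H|_{\partial\Delta_k}\cdot(H\circ\tau_k)\bigr]\qquad\text{in }\pi_1(\widetilde X^*,(x_0,\lambda)),
\end{equation*}
so it suffices to show each conjugate lies in $G$.

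Fix $k$ and drop the subscript. The loop $H|_{\partial\Delta}$ lies in $\widetilde X_{D_i^\circ\setminus\{\lambda_i\}}$, a fibre bundle over $D_i^\circ\setminus\{\lambda_i\}\simeq\partial D_i$ with fibre $\widetilde X_\lambda$ and geometric monodromy $\widetilde h_i$ (Remark~\ref{rk-relHiHit}) containing the trivial subbundle $A\times(D_i^\circ\setminus\{\lambda_i\})$. Homotoping $\tau$ in $I^2\setminus\bigcup\mathrm{int}(\Delta_j)$ so that $\sigma\circ\tau$ follows $\rho_i$ up to $\ell_i$ and a short arc along $\partial D_i$, and $\partial\Delta$ so that $\sigma\circ\partial\Delta$ is a reparametrisation of $\delta_i$, the conjugate becomes a lift of $\omega_i$ to a loop in $\widetilde X^*$ based at $(x_0,\lambda)$. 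The canonical such lift, obtained by transporting $(x_0,\lambda)$ through the trivial subbundle $\{x_0\}\times\mathbb{P}^{1*}$, is exactly $(x_0,\omega_i)$, which belongs to $G$ (take $\alpha$ constant at $(x_0,\lambda)$ in the definition of a generator). Thus, modulo $G$, the conjugate is represented by the ``fibrewise difference'', which via a trivialisation of $p^*$ over a contractible piece corresponds to a path $\beta\colon I\to\widetilde X_\lambda$ ending at $(x_0,\lambda)$ and starting at some $(b,\lambda)\in\widetilde X_\lambda$---the landing point of $H|_{\partial\Delta}$ back in the generic fibre.

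The last step is where the relative structure becomes essential, and constitutes the main obstacle. Under the surjectivity $\pi_0(A)\twoheadrightarrow\pi_0(X_\lambda)$, pick a path $\mu$ in $X_\lambda$ from $b$ to some $a\in A$, lift it to a path $\mu'$ in $\widetilde X_\lambda$ from $(b,\lambda)$ to $(a,\lambda)\in\widetilde A_\lambda$, and form $\alpha:=\mu'\cdot\beta\in F^1(\widetilde X_\lambda,\widetilde A_\lambda,(x_0,\lambda))$. A direct bookkeeping---using that $\widetilde h_i$ fixes $\widetilde A_\lambda$ pointwise, that the trivial subbundle $A\times\mathbb{P}^1\subseteq\widetilde X$ makes $\mu'^{-1}\cdot(b,\omega_i)\cdot\mu'$ homotopic in $\widetilde X^*$ to $(a,\omega_i)$, and that $\widetilde h_i\circ\mu'=\mu'$---identifies this fibrewise difference with a generator $\alpha^{-1}\cdot(f\circ\alpha(0),\omega_i)\cdot\alpha$ of $G$, modulo further elements of $G$. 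Collecting the factors gives $[\gamma]\in G$, as desired. The logic is modelled on \cite[\S 4.2]{C2} and \cite[\S 7.4]{E1}, but the lack of bijectivity of $\pi_0(A)\to\pi_0(X_\lambda)$ is precisely the point that forces one to work with relative (rather than absolute) loops in the generic fibre, and carrying out the local identification inside $\widetilde X^*$---rather than merely inside $\widetilde X$---is the technical heart of the proof.
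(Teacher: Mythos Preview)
Your global strategy---puncture the null-homotopy, decompose $[\gamma]$ as a product of conjugates of small loops around the punctures, and analyse each conjugate locally---matches the paper's (this is Sublemma~\ref{lemma-freehomotopy} and the surrounding argument). The gap is entirely in your local analysis (the last two paragraphs), where several statements are either undefined or false.

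First, the expression $(b,\omega_i)$, meaning the path $t\mapsto(b,\omega_i(t))$, is only a path in $\widetilde X$ when $b\in A$: by definition $\widetilde X=\{(x,\ell):x\in\Pi(\ell)\}$, so $(b,\omega_i(t))\in\widetilde X$ for all $t$ forces $b\in\bigcap_\ell X_\ell=A$. Thus your sentence ``the trivial subbundle $A\times\mathbb{P}^1\subseteq\widetilde X$ makes $\mu'^{-1}\cdot(b,\omega_i)\cdot\mu'$ homotopic in $\widetilde X^*$ to $(a,\omega_i)$'' is meaningless for $b\notin A$. Second, your claim $\widetilde h_i\circ\mu'=\mu'$ is false: $\widetilde h_i$ fixes only $\widetilde A_\lambda$, not all of $\widetilde X_\lambda$, and your $\mu'$ runs from $(b,\lambda)$ with $b$ an arbitrary point of $X_\lambda$. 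Third, your ``fibrewise difference'' $\beta$ is never actually constructed; a trivialisation of $p^*$ exists only over contractible pieces of $\mathbb{P}^{1*}$, and $\omega_i$ is not null-homotopic there, so comparing two lifts of $\omega_i$ does not produce a single fibre path in the way you suggest.

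The paper handles the local step quite differently, and in doing so uses two hypotheses you never invoke. In Sublemma~\ref{lemma-freehomotopy} it pushes $\beta\circ\gamma_k$ into a neighbourhood of the \emph{special} fibre $\widetilde X_{\lambda_{i(k)}}$, trivialises there via $\psi_i$, and uses the surjectivity of $\pi_0(A)\to\pi_0(X_{\lambda_i}\setminus\Sigma_i)$ to find $x_k\in A$ with $\beta\circ\gamma_k$ freely homotopic in $\widetilde X^*$ to a loop in $\{x_k\}\times\partial D_{i(k)}$. Then---and this is labelled ``far from obvious'' in the paper (Claim~\ref{existence-rel-loop})---it uses the bijectivity of $\pi_0(X_\lambda)\to\pi_0(X)$ from \cite[Th\'eor\`eme~2.5]{E2} to produce a path $\sigma_k$ in $\widetilde X_\lambda$ from $(x_k,\lambda)$ to $(x_0,\lambda)$; this $\sigma_k$ is the relative loop that enters the generator of $G$. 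Your argument bypasses the special fibre entirely and therefore never uses the hypothesis on $\pi_0(X_{\lambda_i}\setminus\Sigma_i)$; without it (or an equivalent input), there is no reason the local loop should be freely homotopic in $\widetilde X^*$ to one lying over a point of $A$, which is exactly what is needed to land in $G$.
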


In order to state the second key lemma, we consider the normal subgroup $G'$ of $\pi_1(\widetilde X^*,(x_0,\lambda))$ generated by the loops 
\begin{equation*}
t\in I\mapsto (x_0,\omega_i(t))\in A\times\mathbb{P}^{1*}
\quad\mbox{and}\quad
t\in I\mapsto (\alpha^{-1}\cdot\widetilde h_i\circ\alpha) (t) \in \widetilde X_\lambda
\end{equation*} 
for any $1\leq i\leq N$ and any $\alpha\in F^1(\widetilde X_\lambda,\widetilde A_\lambda,(x_0,\lambda))$. 

\begin{lemma}\label{lemma-fget-2}
The normal subgroups $G$ and $G'$ coincide.
\end{lemma}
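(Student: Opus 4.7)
The plan is to construct a single homotopy identity in $\pi_1(\widetilde X^*,(x_0,\lambda))$ that interchanges the two families of generators, and then deduce both inclusions $G\subseteq G'$ and $G'\subseteq G$ by one-line algebraic rearrangements.

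The key step is a square homotopy built from the isotopy $\widetilde H_i$. Given a relative loop $\alpha\in F^1(\widetilde X_\lambda,\widetilde A_\lambda,(x_0,\lambda))$ with $\alpha(0)=(a_0,\lambda)\in \widetilde A_\lambda$ (so $a_0=f\circ\alpha(0)\in A$), define $\phi\colon I\times I\to \widetilde X^*$ by $\phi(t,\tau):=\widetilde H_i(\alpha(t),\tau)$. By property~(2) of $\widetilde H_i$, this map factors through $\widetilde X_{\partial K_i}\subseteq \widetilde X^*$, since the image of $\omega_i$ lies in $\mathbb{P}^{1*}$ by construction of the $D_j$ and $\rho_j$. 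The four edges of $\phi$ are immediate: the bottom ($\tau=0$) is $\alpha$ by property~(1); the top ($\tau=1$) is $\widetilde h_i\circ\alpha$ by definition of $\widetilde h_i$; and the left ($t=0$) and right ($t=1$) edges, by property~(4), are exactly $(a_0,\omega_i)$ and $(x_0,\omega_i)$. Traversing the boundary of this square counterclockwise based at $(x_0,\lambda)$ therefore yields
\begin{equation*}
[\alpha^{-1}\cdot (a_0,\omega_i)\cdot (\widetilde h_i\circ\alpha)] = [(x_0,\omega_i)] \quad\mbox{in}\quad \pi_1(\widetilde X^*,(x_0,\lambda)).
\end{equation*}

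With this identity in hand, both inclusions follow formally. For $G'\subseteq G$, observe first that $[(x_0,\omega_i)]$ is itself the $G$-generator attached to the constant loop at $(x_0,\lambda)$ (for which $a_0=x_0$); it therefore lies in $G$. Rewriting the fundamental identity as
\begin{equation*}
[\alpha^{-1}\cdot \widetilde h_i\circ\alpha] = [\alpha^{-1}\cdot (a_0,\omega_i)^{-1}\cdot\alpha]\cdot [(x_0,\omega_i)]
\end{equation*}
then exhibits the remaining generators of $G'$ as the product of the inverse of a $G$-generator with an element already shown to lie in $G$. For the reverse inclusion $G\subseteq G'$, the same identity rearranged as
\begin{equation*}
[\alpha^{-1}\cdot (a_0,\omega_i)\cdot \alpha] = [(x_0,\omega_i)]\cdot [\alpha^{-1}\cdot \widetilde h_i\circ\alpha]^{-1}
\end{equation*}
shows that every $G$-generator is a product of two $G'$-generators, one of them inverted. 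The only genuine content is the construction of the square $\phi$ and the verification that it stays inside $\widetilde X^*$; once that commutative square is drawn, the rest is purely formal, and I do not expect a serious obstacle beyond taking care with the path-composition conventions at the boundary.
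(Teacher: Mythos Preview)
Your argument is correct, and its core ingredient---the square homotopy $\phi(t,\tau)=\widetilde H_i(\alpha(t),\tau)$ yielding the identity $[\alpha^{-1}\cdot(a_0,\omega_i)\cdot\widetilde h_i\circ\alpha]=[(x_0,\omega_i)]$ in $\pi_1(\widetilde X^*,(x_0,\lambda))$---is exactly the content of the paper's Claims~\ref{fccp} and~\ref{sccp}. Where you diverge is in the packaging: the paper introduces an auxiliary normal subgroup $G''$ generated by both $\alpha^{-1}\cdot(f\circ\alpha(0),\omega_i)\cdot\widetilde h_i\circ\alpha$ and $\alpha^{-1}\cdot\widetilde h_i\circ\alpha$, proves $G''=G$ by invoking Lemma~\ref{lemma-fget} (i.e.\ the characterization of $G$ as the kernel of $\pi_1(\widetilde X^*)\to\pi_1(\widetilde X)$) together with the null-homotopy result \cite[Lemma~4.8]{CE}, and only then uses the identity to show $G'=G''$. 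Your route is more economical: once the identity is in hand, both inclusions $G\subseteq G'$ and $G'\subseteq G$ drop out by the two one-line rearrangements you wrote, with no appeal to Lemma~\ref{lemma-fget}, no intermediate subgroup, and no external citation. This makes Lemma~\ref{lemma-fget-2} logically independent of Lemma~\ref{lemma-fget}, which is a genuine gain in clarity even if the underlying geometric input is identical.
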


Combined with \cite[Corollaire~5.3]{E2}, these lemmas imply Proposition \ref{fgxt}. Indeed, by \cite[Corollaire~5.3]{E2}, $\pi_1(\widetilde X,\widetilde X^*,(x_0,\lambda))=\{e\}$. Therefore, by the exact homotopy sequence of the pointed pair $(\widetilde X,\widetilde X^*)$ (with base point $(x_0,\lambda)$), the natural map (\ref{ixtsdxt}) is surjective. Now, by Lemmas \ref{lemma-fget} and \ref{lemma-fget-2}, its kernel is $G'$. Therefore, there is a natural isomorphism 
\begin{equation*}
\pi_1(\widetilde X^*,(x_0,\lambda)) \big/ G' \overset{\sim}{\rightarrow}\pi_1(\widetilde X,(x_0,\lambda)),
\end{equation*}
and a presentation of $\pi_1(\widetilde X,(x_0,\lambda))$ is obtained from the presentation of $\pi_1(\widetilde X^*,(x_0,\lambda))$ given in Proposition \ref{fgxts} only by adding the relations
\begin{equation*}
[(x_0,\omega_i)]_{\widetilde{X}^*,(x_0,\lambda)}=e
\quad\mbox{and}\quad 
\widetilde{\mbox{Var}}^{\mbox{\tiny rel}}_{\, \widetilde h_i}([\alpha]):=[\alpha^{-1}\cdot\widetilde h_i\circ\alpha]_{\widetilde{X}^*,(x_0,\lambda)}=e
\end{equation*}
for any $1\leq i\leq N$ and any $[\alpha]\in \pi_1(\widetilde X_\lambda,\widetilde A_\lambda,(x_0,\lambda))$.
(Remind the identifications (\ref{identhc1}) and (\ref{identhc2}).) Proposition \ref{fgxt} follows.

To complete the proof of the proposition, we must now prove the key lemmas \ref{lemma-fget} and \ref{lemma-fget-2}. Let us start with the proof of Lemma \ref{lemma-fget}.

\begin{proof}[Proof of Lemma \ref{lemma-fget}]
It only remains to prove that the kernel of the natural map (\ref{ixtsdxt}) is contained in $G$.  
Precisely, we must show that if $\alpha$ is any element of $F^1(\widetilde X^*,(x_0,\lambda))$ which is null-homotopic in $(\widetilde X,(x_0,\lambda))$ (i.e., $[\alpha]_{\widetilde X,(x_0,\lambda)}=e$), then 
\begin{equation*}
[\alpha]_{\widetilde X^*,(x_0,\lambda)}\in G.
\end{equation*} 

Let $\alpha\in F^1(\widetilde X^*,(x_0,\lambda))$ with $[\alpha]_{\widetilde X,(x_0,\lambda)}=e$. By the exact homotopy sequence of the pointed pair $(\widetilde X,\widetilde X^*)$, we have
\begin{equation*}
[\alpha]_{\widetilde X^*,(x_0,\lambda)} =
[\partial\beta]_{\widetilde X^*,(x_0,\lambda)},
\end{equation*}
where $\beta$ is a relative homotopy $2$-cell of $(\widetilde X,\widetilde X^*,(x_0,\lambda))$ and $\partial\beta$ is its boundary.\footnote{By a relative homotopy $2$-cell of $(\widetilde X,\widetilde X^*,(x_0,\lambda))$, we mean a map from the square $I^2$ to $\widetilde X$ with the face $\{(t_1, t_2)\in I^2 \mid t_2 = 0\}$ sent into $\widetilde X^*$ and all other faces sent to the base point $(x_0,\lambda)$. As usual, the boundary $\partial\beta$ of a relative homotopy $2$-cell $\beta$ is the absolute loop defined by the formula $\partial\beta(t):=\beta(t,0)$ for any $t\in I$. See \cite[\S 15]{St}.}
By \cite[Proposition 5.2]{E2}, we may assume that the set
\begin{equation*}
\beta^{-1}\biggl(\bigcup_{1\leq i\leq N} \widetilde X_{\lambda_i}\biggr)
\end{equation*}
is either empty or consists of finitely many points $P_1,\ldots, P_{k_0}$. Clearly, if this set is empty, then $[\alpha]_{\widetilde X^*,(x_0,\lambda)}=e\in G$ and we are done. Now, if 
\begin{equation*}
\emptyset\not=\beta^{-1}\biggl(\bigcup_{1\leq i\leq N} \widetilde X_{\lambda_i}\biggr)=\{P_k\in I^2\, ;\, 1\leq k\leq k_0\},
\end{equation*}
then, for each $k$, we pick a small closed disc $\Delta_k$ centred at $P_k$ such that $\Delta_k\cap\Delta_{k'}=\emptyset$ whenever $k\not=k'$, and we consider a loop $\gamma_k\colon I\to\partial \Delta_k$ which runs once counterclockwise in the boundary $\partial \Delta_k$ of $\Delta_k$.

\begin{sublemma}\label{lemma-freehomotopy}
Fix an index $k$ ($1\leq k\leq k_0$), and suppose that the corresponding point $P_k$ belongs to the subset $\beta^{-1}(\widetilde X_{\lambda_{i(k)}})$ for some $1\leq i(k)\leq N$ depending on $k$. 
If $\Delta_k$ is small enough, then the loop $\beta\circ\gamma_k$ is freely homotopic in $\widetilde X^*$ to a loop $\gamma_k'$ based at $(x_k,\ell_{i(k)})$ and the image of which is contained in $\{x_k\}\times\partial D_{i(k)}\subseteq \widetilde A^*:=A\times\mathbb{P}^{1*}$, where $x_k$ is a point of $A$ and where $\ell_{i(k)}$ and $\partial D_{i(k)}$ are as in Section \ref{sect-varop}. 
\end{sublemma}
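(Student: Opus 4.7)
The plan is to reduce $\beta\circ\gamma_k$, by a free homotopy in $\widetilde X^*$, to a meridian around the special fibre $\widetilde X_{\lambda_{i(k)}}$ near $q:=\beta(P_k)$, and then to transport that meridian into the trivial subbundle $\widetilde A^*=A\times\mathbb{P}^{1*}$. As a preliminary step, since $P_k$ is an isolated point of the finite set $\beta^{-1}\bigl(\bigcup_{1\le i\le N}\widetilde X_{\lambda_i}\bigr)$, I shrink $\Delta_k$ until $\beta(\Delta_k)$ lies in an open neighbourhood $V$ of $q$ in $\widetilde X$ satisfying $V\cap\widetilde X_{\lambda_j}=\emptyset$ for every $j\neq i(k)$ and $p(V)\subseteq D_{i(k)}$. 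Then $\beta\circ\gamma_k\subseteq V\cap\widetilde X^*=V\setminus\widetilde X_{\lambda_{i(k)}}$, and $\beta|_{\Delta_k}$ gives a null-homotopy of this loop in $V$ which passes through $q$; the obstruction to null-homotopy in $\widetilde X^*$ is therefore concentrated at the single point $q$.

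The key step is to identify, up to homotopy in $V\cap\widetilde X^*$, the loop $\beta\circ\gamma_k$ with a power $\mu^m$ of a meridian $\mu$ linking the hypersurface $\widetilde X_{\lambda_{i(k)}}$ at a smooth point near $q$. For this I invoke the general-position assumption on $\beta$ provided by \cite[Proposition~5.2]{E2}, together with a Whitney stratification of $\widetilde Y$ and the conic local structure of complex analytic sets at $q$; these imply that $V\cap\widetilde X^*$ has the homotopy type of a stratified link whose fundamental group is cyclic and generated by such a meridian. Next, because we are in the case $\Sigma\subseteq Z$ (so that $\Sigma\cap X=\emptyset$), the surjectivity of $\pi_0(A)\to\pi_0(X_{\lambda_{i(k)}}\setminus\Sigma_{i(k)})$ hypothesized in Theorem~\ref{mt} reduces to surjectivity of $\pi_0(A)\to\pi_0(X_{\lambda_{i(k)}})$, which provides a path $\sigma$ in $X_{\lambda_{i(k)}}$ from $f(q)$ to some $x_k\in A$; lifting gives a path $\widetilde\sigma$ in $\widetilde X_{\lambda_{i(k)}}$ from $q$ to $(x_k,\lambda_{i(k)})\in\widetilde A_{\lambda_{i(k)}}$. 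A sufficiently thin tubular neighbourhood of $\widetilde\sigma$ in $\widetilde X$, chosen to avoid $\widetilde X_{\lambda_j}$ for $j\neq i(k)$ (possible by compactness of $\widetilde\sigma$ and finiteness of the $\lambda_j$'s), yields a free homotopy in $\widetilde X^*$ from $\mu^m$ to a meridian loop based at $(x_k,\lambda_{i(k)})$. Inside the trivial subbundle $A\times\mathbb{P}^1$ this meridian has the form $\{x_k\}\times$(small loop around $\lambda_{i(k)}$), and radially deformation-retracting $\{x_k\}\times\bigl(D_{i(k)}\setminus\{\lambda_{i(k)}\}\bigr)$ onto $\{x_k\}\times\partial D_{i(k)}$ inside $\widetilde A^*$ gives the desired loop $\gamma_k'=(x_k,\delta_{i(k)})^{\,m}$, based at $(x_k,\ell_{i(k)})$ and with image in $\{x_k\}\times\partial D_{i(k)}$.

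The main obstacle is the meridian identification in the second paragraph at points where $q$ is singular in $\widetilde X$ or in $\widetilde X_{\lambda_{i(k)}}$: at a smooth transverse intersection the local model is $(\mathbb{C}^n,\mathbb{C}^{n-1})$, whose complement retracts onto $S^1$, but the general situation demands the stratified topology developed in \cite{E2} (in particular \cite[Proposition~5.2]{E2}) to control the local structure and extract a canonical meridian generator. The transport along $\widetilde\sigma$ and the final radial retraction are by comparison routine, relying on compactness arguments and on the product structure of the trivial subbundle $\widetilde A^*\subseteq\widetilde X^*$; the connectivity hypotheses of Theorem~\ref{mt} are used precisely here, exactly at the point where the original proofs in \cite{C2,E1} would take the base component of $A$ for granted.
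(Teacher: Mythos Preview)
Your high-level strategy matches the paper's: use the surjectivity of $\pi_0(A)\to\pi_0(X_{\lambda_{i(k)}})$ to find a path in the special fibre from $q=\beta(P_k)$ to a point of $\widetilde A_{\lambda_{i(k)}}$, and then slide $\beta\circ\gamma_k$ along this path to land in $\{x_k\}\times(D_{i(k)}\setminus\{\lambda_{i(k)}\})$. However, the technical execution has real gaps, and the paper fills them by a different mechanism than the one you propose.

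First, your ``meridian identification'' is not justified. You assert that $V\cap\widetilde X^*$ has cyclic fundamental group generated by a meridian; in the singular setting this is false in general (the local link of $\widetilde X$ at $q$ can have complicated $\pi_1$). What is true, and what you actually need, is that locally near $q$ one has a \emph{product} structure $\widetilde X\cong \widetilde X_{\lambda_{i(k)}}\times\mathbb{C}$ with $p$ the second projection, because $q\notin\Sigma_{i(k)}$ (we are in the case $\Sigma\subseteq Z$) so the hyperplane $L_{i(k)}$ is transverse to the stratification at $q$. The reference you cite, \cite[Proposition~5.2]{E2}, only puts $\beta$ in general position (finitely many preimages); it says nothing about the local link. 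Second, your ``tubular neighbourhood of $\widetilde\sigma$'' and the transport of meridians along it are not defined for singular $\widetilde X$; you would again need a product structure along the whole path, and you do not explain why $\widetilde\sigma$ can be kept in the region where such a structure exists.

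The paper avoids both difficulties simultaneously by a single global device: it excises small balls $E_i$ around the finite set $\Sigma_i$ (possible because $\mbox{im}(\beta)$ misses $\Sigma_i\subseteq Z$), and then invokes \cite[Proposition~5.4]{E2} to get a \emph{trivialization} $\psi_i\colon \widetilde X_{U_i}\setminus E_i\overset{\sim}{\to} U_i\times F_i$ compatible with $A\times U_i$. The path in the special fibre is chosen inside $\widetilde X_{\lambda_i}\setminus E_i$ (this requires the local conic structure lemma to show $\widetilde X_{\lambda_i}\setminus E_i$ is a deformation retract of $\widetilde X_{\lambda_i}$), and the free homotopy is then written down explicitly in the product coordinates $U_i\times F_i$. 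This replaces your local meridian/tube arguments by an honest product structure on a region containing everything in sight; that is the missing ingredient in your sketch.
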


Here, by a \emph{free} homotopy between $\beta\circ\gamma_k$ and $\gamma_k'$, we mean a homotopy 
\begin{equation*}
\varphi\colon I\times I\to\widetilde X^*,\quad 
(t,\tau)\mapsto\varphi(t,\tau),
\end{equation*}
from the map $\beta\circ\gamma_k\colon I\to \widetilde X^*$ to the map $\gamma_k'\colon I\to \widetilde X^*$ such that for each parameter $\tau\in I$, the map 
\begin{equation*}
t\in I\mapsto \varphi_\tau(t):=\varphi(t,\tau)\in \widetilde X^*
\end{equation*}
 is a loop (i.e., $\varphi_\tau(0)=\varphi_\tau(1)$).

\begin{remark}
Sublemma \ref{lemma-freehomotopy} corresponds to Lemma 7.4.2 of \cite{E1}. However, unlike the latter, since we do not assume that the map $\pi_0(A)\rightarrow \pi_0(X_\lambda)$ is injective, the point $x_k$ may be different from the base point $x_0$. This is a crucial difference with \cite{E1} and the reason which leads us to the \emph{relative} variation.
\end{remark}

\begin{proof}[Proof of Sublemma \ref{lemma-freehomotopy}]
To simplify, hereafter we write ``$i$'' instead of ``$i(k)$''.
For each $s\in\Sigma_i$, pick a small closed ball $\bar B_\varepsilon(s)\subseteq\widetilde{\mathbb{P}^n}$ with centre $s$ and radius $\varepsilon>0$ such that the following four conditions hold true:
\begin{enumerate}
\item
$\bar B_\varepsilon(s)\cap \bar B_\varepsilon(s')=\emptyset$ whenever $s\not=s'$;
\item
$\bar B_\varepsilon(s)\cap \widetilde{\Pi_0}=\emptyset$;
\item
$\bar B_\varepsilon(s)\cap p^{-1}(\lambda_j)=\emptyset$, where $\lambda_j$ is the parameter of a special hyperplane $L_j$ of $\Pi$ such that $L_j\not=L_i$;
\item
$\bar B_\varepsilon(s)\cap \mbox{im}(\beta)=\emptyset$ (this is possible as $\mbox{im}(\beta)$ is compact and does not intersect the set $\Sigma_i$, which is contained in $Z$).
\end{enumerate}

Take an open disc $U_i$ with centre $\lambda_i$ and radius $r_i$ such that $U_i\subseteq D_i$, and set
\begin{equation*}
E_i:=\bigcup_{s\in \Sigma_i} \Bigl(B_\varepsilon(s)\cap 
\widetilde X_{U_i}\Bigr),
\end{equation*}
where $B_\varepsilon(s)$ is the open ball with centre $s$ and radius $\varepsilon$. 

\begin{claim}\label{claim-exitence-path}
There exists a point $x_k\in A$ together with a path 
$\mu_k\colon I\to \widetilde X_{\lambda_i}\setminus E_i$ 
such that $\mu_k(0)=\beta(P_k)$ and $\mu_k(1)=(x_k,\lambda_i)$.
\end{claim}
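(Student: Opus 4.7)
The approach is to reduce the problem from the blow-up $\widetilde X_{\lambda_i}$ down to the fibre $X_{\lambda_i}$ via the blowing-up morphism, and then invoke the surjectivity hypothesis $\pi_0(A) \to \pi_0(X_{\lambda_i} \setminus \Sigma_i)$ which is part of the assumptions of Theorem \ref{mt}.

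First I would observe that the restriction $f\colon \widetilde X_{\lambda_i} \to X_{\lambda_i}$ of the blow-up to the fibre over $\lambda_i$ is a homeomorphism: as a set $\widetilde X_{\lambda_i} = X_{\lambda_i} \times \{\lambda_i\}$ and $f$ is simply the projection onto the first factor, under which $\widetilde A_{\lambda_i} = A \times \{\lambda_i\}$ corresponds bijectively to $A$. It is therefore equivalent to produce a path downstairs in $X_{\lambda_i}$ from $f(\beta(P_k))$ to some $x_k \in A$ that avoids $\bigcup_{s \in \Sigma_i}(B_\varepsilon(s) \cap X_{\lambda_i})$, and then to lift it by $f^{-1}$.

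Second I would check the starting point: by condition~(4) in the choice of the balls $\bar B_\varepsilon(s)$, the image $\mbox{im}(\beta)$ is disjoint from every $\bar B_\varepsilon(s)$, so in particular $f(\beta(P_k))$ lies outside $\bigcup_{s\in\Sigma_i} B_\varepsilon(s)$. The core step is then to show that, for $\varepsilon$ small enough, the inclusion
\[
X_{\lambda_i} \setminus \bigcup_{s \in \Sigma_i} \bar B_\varepsilon(s) \;\hookrightarrow\; X_{\lambda_i} \setminus \Sigma_i
\]
induces a bijection on $\pi_0$. I would deduce this from the Whitney stratification of $Y$ (chosen in Section~\ref{sect-varop}): near each isolated point $s\in\Sigma_i$, the trace $\bar B_\varepsilon(s) \cap X_{\lambda_i}$ has a stratified conic structure, so a small neighborhood of $s$ in $X_{\lambda_i}$ deformation retracts to $\{s\}$ and its complement to the punctured neighborhood; no new path-components are created by removing these finitely many balls.

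Granted this $\pi_0$-bijection, the surjectivity hypothesis $\pi_0(A) \to \pi_0(X_{\lambda_i}\setminus \Sigma_i)$ yields a point $x_k \in A$ lying in the same path-component as $f(\beta(P_k))$ inside $X_{\lambda_i} \setminus \bigcup_{s\in \Sigma_i} B_\varepsilon(s)$ (note that $A \subseteq \Pi_0$ is disjoint from every $\bar B_\varepsilon(s)$ by condition~(2), so $A$ sits inside that complement). A path there joining $f(\beta(P_k))$ to $x_k$, lifted via $f^{-1}$, delivers the required $\mu_k\colon I\to \widetilde X_{\lambda_i}\setminus E_i$ with $\mu_k(0)=\beta(P_k)$ and $\mu_k(1)=(x_k,\lambda_i)$.

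The main obstacle I expect is precisely the Whitney-retract step: one must argue cleanly that small open balls around the finite set $\Sigma_i$ intersected with the singular space $X_{\lambda_i}$ can be chosen so that their removal does not change the set of path-components of $X_{\lambda_i}\setminus \Sigma_i$. Once this standard stratification fact is invoked, the rest of the proof is essentially bookkeeping made possible by the fibrewise homeomorphism provided by the blow-up.
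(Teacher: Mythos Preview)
Your proposal is correct and follows essentially the same approach as the paper: both use the local conic structure of the stratified space near the points of $\Sigma_i$ (the paper cites the Burghelea--Verona lemma \cite{BV} explicitly) to see that removing small balls around $\Sigma_i$ does not alter $\pi_0$, and then invoke the surjectivity of $\pi_0(A)\to\pi_0(X_{\lambda_i}\setminus\Sigma_i)$. One small slip in your write-up: since we are in the case $\Sigma\subseteq Z$, the points $s\in\Sigma_i$ do \emph{not} lie in $X_{\lambda_i}$, so the relevant retraction is from $X_{\lambda_i}=X_{\lambda_i}\setminus\Sigma_i$ onto $X_{\lambda_i}\setminus\bigcup_s B_\varepsilon(s)$ (the paper phrases it as $\widetilde X_{\lambda_i}\setminus E_i$ being a strong deformation retract of $\widetilde X_{\lambda_i}$), not a retraction of a neighbourhood of $s$ to $\{s\}$.
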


\begin{proof}
Since $\Sigma_i\subseteq Z$, by applying the local conic structure lemma of D. Burghelea and A. Verona (cf.~\cite[Lemma 3.2]{BV}) to the set $\widetilde Y_{\lambda_i}$ (equipped with an appropriate Whitney stratification), it follows that if $\varepsilon$ is small enough, then the set $\widetilde X_{\lambda_i}\setminus E_i$ is a strong deformation retract of $\widetilde X_{\lambda_i}=\widetilde X_{\lambda_i}\setminus (\Sigma_i\times\{\lambda_i\})$. Combined with the surjectivity of 
\begin{equation*}
\pi_0(\widetilde A_{\lambda_i})\rightarrow
\pi_0(\widetilde X_{\lambda_i})
\end{equation*}
(which follows from that of $\pi_0(A)\rightarrow
\pi_0(X_{\lambda_i})=\pi_0(X_{\lambda_i}\setminus\Sigma_i)$), this implies the surjectivity of
\begin{equation*}
\pi_0(\widetilde A_{\lambda_i})\rightarrow
\pi_0(\widetilde X_{\lambda_i}\setminus E_i),
\end{equation*}
where all the maps are induced by inclusions.
The claim follows immediately.
\end{proof}

By \cite[Proposition 5.4]{E2} and  \cite[Remark 7.4.5]{E1} (applied with $x_k$ instead of $x_0$), if $\varepsilon$ is small enough and if $r_i\ll\varepsilon$, then there is a trivialization
\begin{equation*}
\psi_i\colon \widetilde X_{U_i}\setminus E_i \overset{\sim}{\rightarrow} U_i\times F_i
\end{equation*}
of the restriction of the projection morphism $p$ to the set $\widetilde X_{U_i}\setminus E_i$ such that:
\begin{enumerate}
\item
$\psi_i(\widetilde X_{(U_i\setminus\{\lambda_i\})}\setminus E_i)=(U_i\setminus\{\lambda_i\})\times F_i$;
\item
$\psi_i(A\times U_i)=U_i\times F_i'$;
\item
$\psi_i(\{x_k\}\times U_i)=U_i\times\{p_2\circ\psi_i(x_k,\lambda_i)\}$;
\end{enumerate}
where the pair $(F_i,F_i')$ is homeomorphic to the pair 
\begin{equation*}
(\widetilde X_{\lambda_i}\setminus E_i,\widetilde A_{\lambda_i}\setminus E_i)=(\widetilde X_{\lambda_i}\setminus E_i,\widetilde A_{\lambda_i}),
\end{equation*} 
and where $p_2$ is the second projection of $U_i\times F_i$. By Claim \ref{claim-exitence-path}, the image of the path 
\begin{equation*}
p_2\circ\psi_i\circ\mu_k
\end{equation*}
is contained in $F_i$, it starts at $p_2\circ\psi_i\circ\beta(P_k)$ and ends at $p_2\circ\psi_i(x_k,\lambda_i)$. 
Clearly, we may assume that the disc $\Delta_k$ is small enough so that
\begin{equation*}
\beta(\Delta_k\setminus\{P_k\}) \subseteq 
\widetilde X_{U_i\setminus\{\lambda_i\}}\setminus E_i.
\end{equation*} 
Then, as $P_k$ is a strong deformation retract of $\Delta_k$, the loop $\psi_i\circ\beta\circ\gamma_k$ is freely homotopic in the product $U_i\times F_i$ to the constant loop based at $\psi_i\circ\beta(P_k)$, and hence the loop $p_2\circ\psi_i\circ\beta\circ\gamma_k$ is freely homotopic in $F_i$ to the constant loop based at $p_2\circ\psi_i\circ\beta(P_k)$. It follows that the loop $\beta\circ\gamma_k$ is freely homotopic in $\widetilde X_{U_i\setminus\{\lambda_i\}}\setminus E_i\subseteq \widetilde X^*$ to the loop $\nu_k$ defined by
\begin{equation*}
t\in I\mapsto \nu_k(t):=\psi_i^{-1}(p_1\circ\psi_i\circ\beta\circ\gamma_k(t),p_2\circ\psi_i\circ\beta(P_k)),
\end{equation*}
where $p_1$ and $p_2$ are the first and second projections of $U_i\times F_i$ respectively. Now it is easy to see that this loop $\nu_k$ is freely homotopic in $\widetilde X_{U_i\setminus\{\lambda_i\}}\setminus E_i$ to a loop $\gamma_k'$ the image of which is contained in 
\begin{equation*}
\{x_k\}\times(U_i\setminus\{\lambda_i\})
\subseteq \{x_k\}\times (D_i\setminus \{\lambda_i\})\subseteq \widetilde A^*:=A\times\mathbb{P}^{1*}. 
\end{equation*}
For instance, the map 
\begin{equation*}
I\times I\to \widetilde X_{U_i\setminus\{\lambda_i\}}\setminus E_i
\end{equation*} 
defined by
\begin{equation*}
(t,\tau)\mapsto \psi_i^{-1}(p_1\circ\psi_i\circ\beta\circ\gamma_k(t),p_2\circ\psi_i\circ\mu_k(\tau))
\end{equation*}
is a free homotopy from $\nu_k$ to such a loop $\gamma_k'$. (The inclusion $\mbox{im}(\gamma_k')\subseteq \{x_k\}\times(U_i\setminus\{\lambda_i\})$ follows from  properties (1)--(3) of the trivialization $\psi_i$.) Moreover, as we are dealing here with free homotopies, by using the standard strong deformation retraction from $D_i\setminus\{\lambda_i\}$ to $\partial D_i$, we may always assume that $\mbox{im}(\gamma_k')$ is actually contained in $\{x_k\}\times\partial D_i$ and that $\gamma_k'$ starts (and ends) at $(x_k,\ell_{i})$ as desired.
 
This completes the proof of Sublemma \ref{lemma-freehomotopy}.
\end{proof}

Now we can complete the proof of Lemma \ref{lemma-fget}.
By Sublemma \ref{lemma-freehomotopy}, $\beta\circ\gamma_k$ is freely homotopic in $\widetilde X^*$ to a loop $\gamma_k'$ based at $(x_k,\ell_{i(k)})$ and such that $\mbox{im}(\gamma_k')\subseteq\{x_k\}\times\partial D_{i(k)}$. It immediately follows that
$\beta\circ\gamma_k$ is homotopic in $(\widetilde X^*,\beta\circ\gamma_k(0))$ to a loop of the form 
\begin{equation}\label{homozeta}
\zeta_k\gamma_k'\zeta_k^{-1},
\end{equation}
where $\zeta_k$ is a path in $\widetilde X^*$ such that $\zeta_k(0)=\beta\circ\gamma_k(0)$ and $\zeta_k(1)=\gamma_k'(0)=(x_k,\ell_{i(k)})$. 
Clearly, the fundamental group
\begin{equation*}
\pi_1(I^2\setminus\{P_1,\ldots,P_{k_0}\},O),
\end{equation*}
where $O$ is the origin in $I^2$, is generated by loops of the form
\begin{equation*}
\xi_k\gamma_k\xi_k^{-1} 
\quad\mbox{for}\quad
1\leq k\leq k_0,
\end{equation*}
where $\xi_k$ is a simple path from $O$ to $\gamma_k(0)\in\partial \Delta_k$ such that:
\begin{enumerate}
\item
$\mbox{im}(\xi_k)\cap\Delta_k=\{\gamma_k(0)\}$;
\item
$\mbox{im}(\xi_k)\cap \mbox{im}(\xi_{k'})=\emptyset$ whenever $k\not=k'$;
\item
$\mbox{im}(\xi_k)\cap\Delta_{k'}=\emptyset$ whenever $k\not=k'$.
\end{enumerate}
Taking a counterclockwise parametrization of the boundary of $I^2$ gives a loop based at $O$ and homotopic in $(I^2\setminus\{P_1,\ldots,P_{k_0}\},O)$ to the loop
\begin{equation*}
\prod_{1\leq k\leq k_0} \xi_{k}\gamma_{k}\xi_{k}^{-1} :=
\xi_1\gamma_1\xi_1^{-1} \cdots \xi_{k_0}\gamma_{k_0}\xi_{k_0}^{-1}
\end{equation*}
(by reordering if necessary). It follows that $\alpha$ is homotopic in $(\widetilde X^*,(x_0,\lambda))$ to the loop
\begin{equation*}
\prod_{1\leq k\leq k_0}  
\beta\circ(\xi_{k}\gamma_{k}\xi_{k}^{-1}),
\end{equation*}
and hence, by (\ref{homozeta}), to the loop
\begin{equation*}
\prod_{1\leq k\leq k_0}
(\beta\circ\xi_k)\cdot\zeta_k\gamma_{k}'\zeta_k^{-1}\cdot(\beta\circ\xi_k)^{-1}.
\end{equation*}

\begin{claim}\label{existence-rel-loop}
For each $k$, there exists a path $\sigma_k\colon I\to\widetilde X_{\lambda}$ such that $\sigma_k(0)=(x_k,\lambda)$ and $\sigma_k(1)=(x_0,\lambda)$.
\end{claim}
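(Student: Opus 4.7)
The plan is to show that $x_0$ and $x_k$ lie in the same path-component of $X_\lambda$; combined with the fact that the blowing-up morphism restricts to a homeomorphism $f|_{\widetilde X_\lambda}\colon \widetilde X_\lambda\to X_\lambda$, this immediately yields the desired path $\sigma_k$ by setting $\sigma_k(t):=(\bar\sigma_k(t),\lambda)$, where $\bar\sigma_k$ is any path in $X_\lambda$ from $x_k$ to $x_0$.

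First, I would exploit the $2$-cell $\beta$ and the path $\mu_k$ to connect $(x_0,\lambda)$ and $(x_k,\lambda_i)$ inside the blown-up space $\widetilde X$. Since $\beta\colon I^2\to\widetilde X$ is continuous and the vertex $O=(0,0)$ lies on the face $\{t_1=0\}$, which by the definition of a relative homotopy $2$-cell is collapsed to the base point, we have $\beta(O)=(x_0,\lambda)$. As $I^2$ is path-connected, the restriction of $\beta$ to any path in $I^2$ joining $O$ to $P_k$ produces a path in $\widetilde X$ from $(x_0,\lambda)$ to $\beta(P_k)$. Concatenating this with $\mu_k$---the path in $\widetilde X_{\lambda_i}\subseteq\widetilde X$ from $\beta(P_k)$ to $(x_k,\lambda_i)$ supplied by Claim \ref{claim-exitence-path}---produces a path in $\widetilde X$ joining $(x_0,\lambda)$ and $(x_k,\lambda_i)$.

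Next, I would transfer this information from $\widetilde X$ to $X_\lambda$. Applying the blowing-up morphism $f$ to the path just constructed shows that $x_0=f(x_0,\lambda)$ and $x_k=f(x_k,\lambda_i)$ lie in the same path-component of $X$. Now the assumptions of Theorem \ref{mt} together with \cite[Th\'eor\`eme 2.5]{E2} guarantee that the pair $(X,X_\lambda)$ is $1$-connected, so the natural map $\pi_0(X_\lambda)\to\pi_0(X)$ is a bijection. Since both $x_0$ and $x_k$ belong to $A\subseteq X_\lambda$ and map to the same component of $X$, they must already lie in the same path-component of $X_\lambda$, which furnishes the desired path $\bar\sigma_k$.

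I expect this argument to be essentially routine rather than genuinely obstructed; the decisive input is the $\pi_0$-bijection $\pi_0(X_\lambda)\to\pi_0(X)$ already guaranteed by the hypotheses of the theorem. The conceptual point worth underlining is precisely the one highlighted in the remark following Sublemma \ref{lemma-freehomotopy}: although $\pi_0(A)\to\pi_0(X_\lambda)$ is only assumed to be surjective---and may be far from injective, which is exactly what makes $x_k$ possibly different from $x_0$---the global $1$-connectivity of $(X,X_\lambda)$ is strong enough to force $x_0$ and $x_k$ into the same path-component of $X_\lambda$ once $\beta$ and $\mu_k$ have linked them inside the ambient space $\widetilde X$.
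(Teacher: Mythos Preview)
Your argument is correct and follows essentially the same approach as the paper: build a path in $\widetilde X$ linking $(x_0,\lambda)$ to a point with first coordinate $x_k$, project by $f$ to connect $x_0$ and $x_k$ in $X$, and then invoke the bijection $\pi_0(X_\lambda)\to\pi_0(X)$ from \cite[Th\'eor\`eme 2.5]{E2}. The only cosmetic difference is that the paper uses the already-introduced paths $\xi_k$, $\zeta_k$ and an auxiliary path $\theta_k(t)=(x_k,\rho_{i(k)}(t))$ to produce a connecting path lying in $\widetilde X^*$, whereas you go directly through the interior of $I^2$ via $\beta$ and then along $\mu_k$; both constructions serve the same purpose.
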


\begin{proof}
This claim is far from being obvious. It follows from the hyperplane section theorem for pencils \cite[Th\'eor\`eme 2.5]{E2}. More precisely,
let $\theta_k\colon I\to \{x_k\}\times \mathbb{P}^{1*}$ be the path defined by
\begin{equation*}
\theta_k(t):=(x_k,\rho_{i(k)}(t)),
\end{equation*}
where $\rho_{i(k)}$ is the tail of the loop $\omega_{i(k)}$---that is, $\rho_{i(k)}$ is the simple path in $\mathbb{P}^{1*}$ joining $\lambda$ to $\ell_{i(k)}$ as defined in Section \ref{sect-varop}. Thus, $\theta_k(0)=(x_k,\lambda)$ and $\theta_k(1)=(x_k,\ell_{i(k)})=\gamma_k'(0)$. Then
\begin{equation*}
\beta\circ\xi_k\cdot\zeta_k\cdot\theta_k^{-1}
\end{equation*}
is a path in $\widetilde X^*\subseteq\widetilde X$ starting at $\beta\circ\xi_k(0)=\alpha(0)=(x_0,\lambda)$ and ending at $\theta_k^{-1}(1)=(x_k,\lambda)$. It follows that $f(x_0,\lambda)=x_0\in A\subseteq X_\lambda$ can be joined to $f(x_k,\lambda)=x_k\in A$ by a path in $X$. Now, by \cite[Th\'eor\`eme 2.5]{E2}, the natural map $\pi_0(X_\lambda)\to \pi_0(X)$ is bijective. Therefore $x_0$ can be joined to $x_k$ in $X_\lambda$. The claim follows immediately.
\end{proof}

Clearly, the loop $\alpha$ is homotopic in $(\widetilde X^*,(x_0,\lambda))$ to the loop
\begin{equation*}
\prod_{1\leq k\leq k_0}((\beta\circ\xi_k)\zeta_k\theta_k^{-1}\sigma_k)\cdot 
(\sigma_k^{-1}\cdot\theta_k\gamma_k'\theta_k^{-1}\cdot \sigma_k) \cdot 
((\beta\circ\xi_k)\zeta_k\theta_k^{-1}\sigma_k)^{-1},
\end{equation*}
which is an element of the normal subgroup $G$ of $\pi_1(\widetilde X^*,(x_0,\lambda))$.
Indeed, the loop $\sigma_k^{-1}\cdot\theta_k\gamma_k'\theta_k^{-1}\cdot\sigma_k$ is homotopic in $(\widetilde X^*,(x_0,\lambda))$ to a (possibly negative) power of the loop 
\begin{equation*}
\sigma_k^{-1}\cdot (f\circ\sigma_k(0),\omega_{i(k)})\cdot\sigma_k.
\end{equation*}
(Note that $\sigma_k\in F^1(\widetilde X_\lambda,\widetilde A_\lambda, (x_0,\lambda))$, and hence the above loop is a generator of the normal subgroup $G$.)

This completes the proof of Lemma \ref{lemma-fget}.
\end{proof}

We now prove Lemma \ref{lemma-fget-2}.

\begin{proof}[Proof of Lemma \ref{lemma-fget-2}]
Let $\alpha \in F^1(\widetilde{X}_{\lambda}, 
\widetilde{A}_{\lambda},(x_0,\lambda))$ and let $1\leq i\leq N$. 

\begin{claim}\label{fccp}
The relative loops $\alpha$ and $(f\circ\alpha(0),\omega_i)\cdot \widetilde h_i\circ\alpha\cdot (f\circ\alpha(1),\omega_i)^{-1}$ are homotopic in $(\widetilde{X}^*, \widetilde{A}_{\lambda},(x_0,\lambda))$. Moreover, if 
\begin{equation*}
T\colon I\times I\to \widetilde{X}^*,\quad
(t,\tau)\mapsto T(t,\tau),
\end{equation*} 
denotes such a homotopy, then we can always choose it so that $T(0,\tau)=(f\circ\alpha(0),\lambda)$ for any parameter $\tau\in I$.
\end{claim}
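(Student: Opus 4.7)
The plan is to construct the homotopy $T$ explicitly from the blown-up monodromy isotopy $\widetilde H_i$, exploiting the crucial property that $\widetilde H_i$ acts on $\widetilde A_\lambda$ in the trivial way $\widetilde H_i((x,\lambda),\tau)=(x,\omega_i(\tau))$ for every $x\in A$ (property~(4) of the isotopy). Since both $\alpha(0)=(x_k,\lambda)$, where $x_k:=f\circ\alpha(0)$, and $\alpha(1)=(x_0,\lambda)$ lie in $\widetilde A_\lambda$, the transported path $t\mapsto \widetilde H_i(\alpha(t),\tau)$ is, for each fixed $\tau\in I$, a path in $\widetilde X_{\omega_i(\tau)}\subseteq \widetilde X^*$ running from $(x_k,\omega_i(\tau))$ to $(x_0,\omega_i(\tau))$; it coincides with $\alpha$ when $\tau=0$ and with $\widetilde h_i\circ\alpha$ when $\tau=1$.

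To promote this family of paths into a relative homotopy, I would, for each $\tau\in I$, let $T(\cdot,\tau)$ be the concatenation of three subpaths according to the partition $[0,\tau/3]\cup[\tau/3,1-\tau/3]\cup[1-\tau/3,1]$ of $I$: on the first (outer) interval, traverse the arc $s\mapsto (x_k,\omega_i(s))$ in $A\times\mathbb{P}^{1*}$ from $s=0$ to $s=\tau$; on the middle interval, traverse an affine reparametrization of $t\mapsto \widetilde H_i(\alpha(t),\tau)$; on the third (outer) interval, traverse the arc $s\mapsto (x_0,\omega_i(s))$ backwards from $s=\tau$ to $s=0$. Continuity across the two moving glue lines $t=\tau/3$ and $t=1-\tau/3$ is immediate from the identities $\widetilde H_i(\alpha(0),\tau)=(x_k,\omega_i(\tau))$ and $\widetilde H_i(\alpha(1),\tau)=(x_0,\omega_i(\tau))$ just noted. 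At $\tau=0$ the two outer intervals degenerate to points and $T(\cdot,0)$ collapses to $\alpha$; at $\tau=1$ the three pieces are precisely the three factors of $(f\circ\alpha(0),\omega_i)\cdot \widetilde h_i\circ\alpha\cdot (f\circ\alpha(1),\omega_i)^{-1}$.

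The remaining verifications are routine. The image of $T$ stays inside $\widetilde X^*$ because $\omega_i$ runs in $\partial K_i\subseteq \mathbb{P}^{1*}$ and therefore avoids every special parameter $\lambda_j$, while the middle piece automatically lies in $\widetilde X_{\omega_i(\tau)}$. The boundary conditions for a relative homotopy in $(\widetilde X^*,\widetilde A_\lambda,(x_0,\lambda))$ all hold: $T(1,\tau)=(x_0,\omega_i(0))=(x_0,\lambda)$ is the base point throughout, and $T(0,\tau)=(x_k,\omega_i(0))=(x_k,\lambda)\in\widetilde A_\lambda$; in particular the additional requirement $T(0,\tau)=(f\circ\alpha(0),\lambda)$ stated in the second half of the claim is automatic, because the initial subpath of $T(\cdot,\tau)$ is always a subarc of $\{x_k\}\times\omega_i$ starting at $(x_k,\lambda)$. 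The only delicate point, and the main bookkeeping obstacle, is continuity of $T$ at $\tau=0$, where the two outer intervals of the partition shrink to points; but the outer pieces uniformly contract to the constant points $(x_k,\lambda)$ and $(x_0,\lambda)$ and the middle reparametrization degenerates to the identity on $I$, so no essential difficulty arises. Conceptually the construction is the natural lift to relative loops of the classical argument used to identify the monodromy conjugation for \emph{absolute} loops, made available here by the fact that the endpoints of $\alpha$ both lie in the section $\widetilde A_\lambda$ on which $\widetilde H_i$ is completely trivial.
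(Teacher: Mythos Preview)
Your construction is correct and follows essentially the same idea as the paper: use the isotopy $\widetilde H_i$ on a middle subinterval to carry $\alpha$ to $\widetilde h_i\circ\alpha$, and attach on the two outer subintervals the arcs $s\mapsto(x_k,\omega_i(s))$ and $s\mapsto(x_0,\omega_i(s))$ supplied by property~(4). The only cosmetic difference is that the paper keeps the subdivision points fixed at $1/3$ and $2/3$ and lets the \emph{speed} of the outer arcs depend on $\tau$ (writing $\omega_i(3t\tau)$ and $\omega_i(3\tau(1-t))$), whereas you let the subdivision points $\tau/3$ and $1-\tau/3$ move; the paper's choice sidesteps the small continuity check at $\tau=0$ that you flag, at the cost of $T(\cdot,0)$ being a reparametrization of $\alpha$ rather than $\alpha$ itself.
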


\begin{proof}
It is similar to the proof of \cite[Lemma 7.3.3]{E1}. The only difference is that, in \cite{E1}, loops are \emph{absolute} whereas we are dealing here with \emph{relative} loops. Let $ \widetilde{H}_i$ be an isotopy underlying the monodromy $\widetilde h_i$ (cf.~Section \ref{rmvopbus}). Then the map $T\colon I\times I\to \widetilde{X}^*$ defined by
\begin{equation*}
(t,\tau) \mapsto
\left\{
\begin{aligned}
& (f\circ\alpha(0),\omega_i(3t\tau)) & \mbox{for} && 0 \leq t \leq 1/3\\
& \widetilde{H}_i(\alpha(3t-1), \tau) & \mbox{for} && 1/3 \leq t \leq 2/3\\
& (f\circ\alpha(1),\omega_i(3\tau(1-t))) & \mbox{for} && 2/3 \leq t \leq 1\\
\end{aligned}
\right.
\end{equation*}
is a homotopy in $(\widetilde{X}^*, \widetilde{A}_{\lambda},(x_0,\lambda))$ from the relative loop
\begin{equation*}
t \mapsto
\left\{
\begin{aligned}
& (f\circ\alpha(0),\lambda) & \mbox{for} && 0 \leq t \leq 1/3\\
& \alpha(3t-1)& \mbox{for} && 1/3 \leq t \leq 2/3\\
& (x_0,\lambda) & \mbox{for} && 2/3 \leq t \leq 1\\
\end{aligned}
\right.
\end{equation*}
(which is clearly homotopic to $\alpha$ in $(\widetilde{X}^*, 
\widetilde{A}_{\lambda},(x_0,\lambda))$) to the relative loop
\begin{equation*}
t \mapsto
\left\{
\begin{aligned}
& (f\circ\alpha(0),\omega_i(3t)) & \mbox{for} && 0 \leq t \leq 1/3\\
& \widetilde h_i\circ\alpha(3t-1)& \mbox{for} && 1/3 \leq t \leq 2/3\\
& (x_0,\omega_i(3(1-t))) & \mbox{for} && 2/3 \leq t \leq 1\\
\end{aligned}
\right.
\end{equation*}
(which is obviously homotopic to the relative loop  $(f\circ\alpha(0),\omega_i)\cdot \widetilde h_i\circ\alpha\cdot (f\circ\alpha(1),\omega_i)^{-1}$ in $(\widetilde{X}^*, \widetilde{A}_{\lambda},(x_0,\lambda))$). Clearly, the homotopy $T$ is such that $T(0,\tau)=(f\circ\alpha(0),\lambda)$ for any $\tau\in I$.
\end{proof}

\begin{claim}\label{sccp}
The absolute loop $\alpha^{-1}\cdot(f\circ\alpha(0),\omega_i)\cdot \widetilde h_i\circ\alpha\cdot (f\circ\alpha(1),\omega_i)^{-1}$ is null-homotopic in $(\widetilde{X}^*, (x_0,\lambda))$. In other words, the absolute loops
\begin{equation*}
\alpha^{-1}\cdot(f\circ\alpha(0),\omega_i)\cdot \widetilde h_i\circ\alpha
\quad\mbox{and}\quad
(f\circ\alpha(1),\omega_i)=(x_0,\omega_i)
\end{equation*}
define the same homotopy class in $\pi_1(\widetilde{X}^*, (x_0,\lambda))$.
\end{claim}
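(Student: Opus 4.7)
The plan is to reduce Claim \ref{sccp} directly to Claim \ref{fccp} by observing that the homotopy $T$ produced there has \emph{both} endpoints fixed, so it is actually an absolute path-homotopy (rel endpoints) and not merely a relative-loop homotopy. Set
\begin{equation*}
\beta:=(f\circ\alpha(0),\omega_i)\cdot\widetilde h_i\circ\alpha\cdot(f\circ\alpha(1),\omega_i)^{-1}.
\end{equation*}
Both $\alpha$ and $\beta$ are paths in $\widetilde X^*$ from $(f\circ\alpha(0),\lambda)$ to $(x_0,\lambda)$. First I would invoke Claim \ref{fccp} to obtain a homotopy $T\colon I\times I\to\widetilde X^*$ in $(\widetilde X^*,\widetilde A_\lambda,(x_0,\lambda))$ between $\alpha$ and $\beta$ with the extra property $T(0,\tau)=(f\circ\alpha(0),\lambda)$ for every $\tau\in I$.

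Next I would record the key observation: since $T$ is a homotopy of \emph{relative loops based at $(x_0,\lambda)$}, we automatically have $T(1,\tau)=(x_0,\lambda)$ for every $\tau\in I$; combined with the property above, this means the endpoints of $T(\cdot,\tau)$ are \emph{both} constant in $\tau$. Hence $T$ is a homotopy rel endpoints between the paths $\alpha$ and $\beta$.

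Given such a $T$, one has the standard consequence in homotopy theory that $\alpha^{-1}\cdot\beta$ is null-homotopic as an absolute loop at $(x_0,\lambda)$: explicitly, $\alpha^{-1}\cdot\beta$ is homotopic in $(\widetilde X^*,(x_0,\lambda))$ to $\alpha^{-1}\cdot\alpha$, which is in turn null-homotopic via the usual reparametrization homotopy. This establishes the first assertion of Claim \ref{sccp}. For the second assertion, use $f\circ\alpha(1)=f(x_0,\lambda)=x_0$ so that the trailing factor of $\beta$ is $(x_0,\omega_i)^{-1}$; multiplying the null-homotopy relation $\alpha^{-1}\cdot(f\circ\alpha(0),\omega_i)\cdot\widetilde h_i\circ\alpha\cdot(x_0,\omega_i)^{-1}\simeq e$ on the right by $(x_0,\omega_i)$ yields exactly the equality of homotopy classes
\begin{equation*}
[\alpha^{-1}\cdot(f\circ\alpha(0),\omega_i)\cdot\widetilde h_i\circ\alpha]_{\widetilde X^*,(x_0,\lambda)}=[(x_0,\omega_i)]_{\widetilde X^*,(x_0,\lambda)}.
\end{equation*}

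I do not foresee any real obstacle here: all the analytic content (construction of the isotopy $\widetilde H_i$, locating the homotopy $T$ inside $\widetilde X^*$ away from the special fibres, and pinning the left endpoint of $T$ to $(f\circ\alpha(0),\lambda)$) has already been absorbed into Claim \ref{fccp}. The only point that requires care is to make the shift from ``relative-loop homotopy'' to ``absolute path-homotopy'' explicit, i.e.\ to emphasize that the relative-loop condition forces $T(1,\tau)$ constant while Claim \ref{fccp} forces $T(0,\tau)$ constant, so together the homotopy is rel endpoints and the standard inversion argument applies.
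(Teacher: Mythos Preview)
Your proposal is correct and takes essentially the same approach as the paper. The paper's proof writes out explicitly the concatenation homotopy
\[
(t,\tau)\mapsto
\begin{cases}
\alpha^{-1}(2t) & 0\le t\le 1/2,\\
T(2t-1,\tau) & 1/2\le t\le 1,
\end{cases}
\]
which is precisely the standard construction showing that a path-homotopy rel endpoints between $\alpha$ and $\beta$ forces $\alpha^{-1}\cdot\beta$ to be null-homotopic; you have simply stated this step abstractly instead of spelling out the formula.
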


\begin{proof}
If $T$ is a homotopy as in Claim \ref{fccp}, then the map $I\times I\to \widetilde{X}^*$ defined by
\begin{equation*}
(t,\tau) \mapsto
\left\{
\begin{aligned}
& \alpha^{-1}(2t) & \mbox{for} && 0 \leq t \leq 1/2\\
& T(2t-1, \tau) & \mbox{for} && 1/2 \leq t \leq 1\\
\end{aligned}
\right.
\end{equation*}
is a homotopy in $(\widetilde{X}^*, (x_0,\lambda))$ from the loop
\begin{equation*}
t\mapsto
\left\{
\begin{aligned}
& \alpha^{-1}(2t) & \mbox{for} && 0 \leq t \leq 1/2\\
& (f\circ\alpha(0),\lambda) & \mbox{for} && 1/2 \leq t \leq 2/3\\
& \alpha(3(2t-1)-1)& \mbox{for} && 2/3 \leq t \leq 5/6\\
& (x_0,\lambda) & \mbox{for} && 5/6 \leq t \leq 1\\
\end{aligned}
\right.
\end{equation*}
(which is homotopic in $(\widetilde{X}_{\lambda}, (x_0,\lambda))$ to the constant loop $t\in I\mapsto (x_0,\lambda)\in\widetilde{X}_\lambda$) to the loop
\begin{equation*}
t\mapsto
\left\{
\begin{aligned}
& \alpha^{-1}(2t) & \mbox{for} && 0 \leq t \leq 1/2\\
& (f\circ\alpha(0),\omega_i(3(2t-1))) & \mbox{for} && 1/2 \leq t \leq 2/3\\
& \widetilde h_i\circ\alpha(3(2t-1)-1)& \mbox{for} && 2/3 \leq t \leq 5/6\\
& (x_0,\omega_i(3(1-(2t-1))) & \mbox{for} && 5/6 \leq t \leq 1\\
\end{aligned}
\right.
\end{equation*}
(which is obviously homotopic to the loop $\alpha^{-1}\cdot(f\circ\alpha(0),\omega_i)\cdot \widetilde h_i\circ\alpha\cdot (f\circ\alpha(1),\omega_i)^{-1}$ in $(\widetilde{X}^*, (x_0,\lambda))$).
\end{proof}

Clearly, for any $1\leq i\leq N$ and any $\alpha \in F^1(\widetilde{X}_{\lambda}, 
\widetilde{A}_{\lambda},(x_0,\lambda))$, the compositions
\begin{equation}\label{compfond}
\alpha^{-1} \cdot (f\circ\alpha(0),\omega_i)
\cdot \widetilde h_{i}\circ \alpha 
\quad\mbox{and}\quad
\alpha^{-1} \cdot \widetilde h_{i}\circ \alpha
\end{equation}
define elements of $F^1(\widetilde X^*,(x_0,\lambda))$ and $F^1(\widetilde X_\lambda,(x_0,\lambda))\subseteq F^1(\widetilde X^*,(x_0,\lambda))$ respectively.
Consider the normal subgroup $G''$ of $\pi_1(\widetilde X^*,(x_0,\lambda))$ generated by (the homotopy classes in $(\widetilde X^*,(x_0,\lambda))$ of) all the elements of this form.

\begin{claim}\label{N-include-in-kernel}
The subgroups $G''$ and $G$ coincide.
\end{claim}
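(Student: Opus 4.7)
The plan is to prove the two inclusions $G\subseteq G''$ and $G''\subseteq G$ separately, using only elementary manipulations in $\pi_1(\widetilde X^*,(x_0,\lambda))$ together with Claim \ref{sccp}.

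For $G\subseteq G''$, I would start with a typical generator $\alpha^{-1}\cdot(f\circ\alpha(0),\omega_i)\cdot\alpha$ of $G$, where $\alpha\in F^1(\widetilde X_\lambda,\widetilde A_\lambda,(x_0,\lambda))$ and $1\leq i\leq N$. Inserting the trivially null-homotopic factor $(\widetilde h_i\circ\alpha)\cdot(\widetilde h_i\circ\alpha)^{-1}$ between $(f\circ\alpha(0),\omega_i)$ and the final $\alpha$, I can rewrite this loop, up to homotopy in $(\widetilde X^*,(x_0,\lambda))$, as
\[
\bigl[\alpha^{-1}\cdot(f\circ\alpha(0),\omega_i)\cdot\widetilde h_i\circ\alpha\bigr]\cdot\bigl[\alpha^{-1}\cdot\widetilde h_i\circ\alpha\bigr]^{-1},
\]
which is a product of (the class of) a first-type generator and the inverse of (the class of) a second-type generator of $G''$, and hence lies in $G''$. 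Along the way I should check that all concatenations define loops based at $(x_0,\lambda)$; this is automatic from $\alpha(1)=(x_0,\lambda)$, $\alpha(0)\in\widetilde A_\lambda$, and the fact that $\widetilde h_i$ fixes $\widetilde A_\lambda$ pointwise.

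For $G''\subseteq G$, I handle the two kinds of generators in turn. For a first-type generator $\alpha^{-1}\cdot(f\circ\alpha(0),\omega_i)\cdot\widetilde h_i\circ\alpha$, Claim \ref{sccp} (together with $f\circ\alpha(1)=x_0$) says that its class in $\pi_1(\widetilde X^*,(x_0,\lambda))$ equals the class of $(x_0,\omega_i)$. But $(x_0,\omega_i)$ is itself a generator of $G$: taking $\alpha$ to be the constant relative loop $\epsilon$ at $(x_0,\lambda)$, which lies in $F^1(\widetilde X_\lambda,\widetilde A_\lambda,(x_0,\lambda))$ because $x_0\in A$, the corresponding generator $\epsilon^{-1}\cdot(x_0,\omega_i)\cdot\epsilon$ is $(x_0,\omega_i)$ up to reparametrization. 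Hence every first-type generator lies in $G$. For a second-type generator $\alpha^{-1}\cdot\widetilde h_i\circ\alpha$, the same insertion trick run in reverse gives the factorization
\[
\bigl[\alpha^{-1}\cdot(f\circ\alpha(0),\omega_i)\cdot\alpha\bigr]^{-1}\cdot\bigl[\alpha^{-1}\cdot(f\circ\alpha(0),\omega_i)\cdot\widetilde h_i\circ\alpha\bigr],
\]
which is a product of the inverse of a generator of $G$ and a first-type generator; both factors already lie in $G$, so the second-type generator lies in $G$ as well, and $G''\subseteq G$.

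I do not expect a serious obstacle here: the whole argument is an algebraic shuffling between the three natural ways of encoding ``$\omega_i$-travel sandwiched with a relative loop'', and each step uses either the identity just proved in Claim \ref{sccp} or the definition of the three normal subgroups. The only subtlety is careful bookkeeping of base points and well-definedness of concatenations, which is automatic once one notes that $\alpha(0)\in\widetilde A_\lambda$ and that $\widetilde h_i$ is the identity on $\widetilde A_\lambda$, so that $(f\circ\alpha(0),\omega_i)$ is a loop at $\alpha(0)$ and $\widetilde h_i\circ\alpha$ remains a relative loop in $F^1(\widetilde X_\lambda,\widetilde A_\lambda,(x_0,\lambda))$.
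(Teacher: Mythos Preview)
Your proof is correct, and for the inclusion $G\subseteq G''$ it is essentially identical to the paper's argument. For $G''\subseteq G$, however, you take a genuinely different and more elementary route. The paper invokes Lemma~\ref{lemma-fget}, which identifies $G$ with the kernel of $\pi_1(\widetilde X^*,(x_0,\lambda))\to\pi_1(\widetilde X,(x_0,\lambda))$, and then shows that each generator of $G''$ is null-homotopic in $\widetilde X$ (citing \cite[Lemma~4.8]{CE} for the second-type generators and the contractibility of $(f\circ\alpha(0),\omega_i)$ in $\widetilde X$ for the first-type ones). Your argument bypasses both Lemma~\ref{lemma-fget} and the external reference entirely: using only Claim~\ref{sccp}, you identify each first-type generator with $(x_0,\omega_i)$, which is visibly a generator of $G$ (via the constant relative loop), and then express each second-type generator algebraically as a product of a $G$-generator inverse and a first-type generator. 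The advantage of your approach is that it stays entirely within $\pi_1(\widetilde X^*,(x_0,\lambda))$ and is purely combinatorial; the paper's approach, while less direct here, makes the geometric content (null-homotopy in the full blown-up space) more transparent and reuses machinery already in place.
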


\begin{proof}
By Lemma \ref{lemma-fget}, to show that $G''\subseteq G$, we must prove that the loops (\ref{compfond}) are null-homotopic in $(\widetilde{X}, (x_0, \lambda))$. For loops of the form $\alpha^{-1} \cdot \widetilde h_{i}\circ \alpha$, this is proved in \cite[Lemma 4.8]{CE}. Now, since $(f\circ\alpha(0),\omega_i)$ is null-homotopic in $(\widetilde X,(f\circ\alpha(0),\lambda))$, it immediately follows that any loop of the form $\alpha^{-1} \cdot (f\circ\alpha(0),\omega_i)\cdot \widetilde h_{i}\circ \alpha$ is null-homotopic in $(\widetilde{X},(x_0, \lambda))$ too.

To prove that $G\subseteq G''$, we observe that any element of $G$ is written as a product of elements of the following form and their inverses:
\begin{align}\label{lgd}
[\beta]_{\widetilde X^*,(x_0,\lambda)}^{-1}\cdot[\alpha^{-1} \cdot 
(f\circ\alpha(0),\omega_i) \cdot \alpha]_{\widetilde X^*,(x_0,\lambda)} 
\cdot [\beta]_{\widetilde X^*,(x_0,\lambda)},
\end{align}
where $\beta\in F^1(\widetilde X^*,(x_0,\lambda))$, $\alpha\in F^1(\widetilde X_\lambda,\widetilde A_\lambda,(x_0,\lambda))$ and $1\leq i\leq N$. Clearly, any representative of the homotopy class (\ref{lgd}) is homotopic in $(\widetilde X^*,(x_0,\lambda))$ to the loop
\begin{align*}
\beta^{-1}\cdot(\alpha^{-1} \cdot 
(f\circ\alpha(0),\omega_i) \cdot (\widetilde h_i\circ\alpha\cdot (\widetilde h_i\circ\alpha)^{-1}) \cdot \alpha) \cdot \beta,
\end{align*}
which is the product of the following absolute loops:
\begin{align}\label{pfal}
\beta^{-1}\cdot
(\underbrace{\alpha^{-1} \cdot(f\circ\alpha(0),\omega_i) \cdot \widetilde h_i\circ\alpha}_{\mbox{\tiny loop in } F^1(\widetilde X^*,(x_0,\lambda))}) 
\cdot \beta\cdot\beta^{-1} \cdot (\underbrace{(\widetilde h_i\circ\alpha)^{-1} \cdot \alpha}_{\mbox{\tiny loop in } F^1(\widetilde X_\lambda,(x_0,\lambda))})
\cdot \beta.
\end{align}
Any product of homotopy classes in $(\widetilde X^*,(x_0,\lambda))$ of loops of the form (\ref{pfal}) and their inverses is an element of $G''$.
\end{proof}

We can now conclude the proof of Lemma \ref{lemma-fget-2}. By Claim \ref{N-include-in-kernel}, it suffices to show $G'=G''$. The inclusion $G'\subseteq G''$ is obvious. Conversely, any element of $G''$ is written as a product of elements of the following forms and their inverses:
\begin{align*}
[\beta]^{-1}\cdot
[\alpha^{-1} \cdot(f\circ\alpha(0),\omega_i) \cdot \widetilde h_i\circ\alpha] 
\cdot [\beta]
\quad\mbox{and}\quad
[\beta]^{-1} \cdot [\alpha^{-1} \cdot \widetilde h_i\circ\alpha]
\cdot [\beta],
\end{align*}
that is, by Claim \ref{sccp}, 
\begin{align*}
[\beta]^{-1}\cdot[(x_0,\omega_i)]\cdot[\beta]
\quad\mbox{and}\quad
[\beta]^{-1}\cdot [\alpha^{-1} \cdot \widetilde h_i\circ\alpha] \cdot [\beta],
\end{align*}
where $\beta\in F^1(\widetilde X^*,(x_0,\lambda))$, $\alpha\in F^1(\widetilde X_\lambda,\widetilde A_\lambda,(x_0,\lambda))$ and $1\leq i\leq N$, all the homotopy classes being in $\pi_1(\widetilde X^*,(x_0,\lambda))$.
The inclusion $G''\subseteq G'$ follows.
\end{proof}

\subsection{Conclusion}
Since the maps 
\begin{equation*}
\pi_1(\widetilde X_\lambda,(x_0,\lambda))\to\pi_1(X_\lambda,x_0)
\quad\mbox{and}\quad
\pi_1(\widetilde X,(x_0,\lambda))\to\pi_1(X,x_0)
\end{equation*}
(induced by the blowing up morphism $f$) are both isomorphisms (cf.~Section \ref{sect-bu}), Theorem \ref{mt} follows from Proposition \ref{fgxt}, Lemma \ref{commdiag-2} and the commutativity of diagram (\ref{commdiag-1}).

\bibliographystyle{amsplain}

\end{document}